\newtheorem{theorem}{Theorem}[section]
\newtheorem{lemma}[theorem]{Lemma}
\newtheorem{corollary}[theorem]{Corollary}
\newtheorem{proposition}[theorem]{Proposition}
\newtheorem{sublemma}{}[theorem]
\theoremstyle{definition}
\theoremstyle{remark}
\numberwithin{equation}{section}
\newcommand{\ba}{\backslash}
\begin{document}

\title[Constructing Binary Matroids]{Complementation, Local Complementation, and Switching in Binary Matroids}

\author{James Oxley}
\address{Mathematics Department\\
Louisiana State University\\
Baton Rouge, Louisiana}
\email{oxley@math.lsu.edu}

\author{Jagdeep Singh}
\address{Mathematics Department\\
Louisiana State University\\
Baton Rouge, Louisiana}
\email{jsing29@math.lsu.edu}

\subjclass{05B35, 05C25}
\keywords{complementation, local complementation, switching, binary matroid}
\date{\today}
\thanks{This paper is dedicated to Joseph Kung who has done so much for our subject.}

\begin{abstract}
In 2004, Ehrenfeucht, Harju, and Rozenberg showed that any graph on a vertex set $V$ can be obtained from a complete graph on $V$ via a sequence of the operations of complementation, switching edges and non-edges at a vertex, and local complementation. The last operation involves taking the complement in the neighbourhood of a vertex.  In this paper, we consider natural generalizations of these operations for binary matroids and explore their behaviour. We characterize all binary matroids obtainable from the binary projective geometry of rank $r$ under the operations of complementation and switching. Moreover, we show that not all binary matroids of rank at most $r$ can be obtained from a projective geometry of rank $r$ via a sequence of the three generalized operations. We introduce a fourth operation and show that, with this additional operation, we are able to obtain all binary matroids.
\end{abstract}

\maketitle

\section{Introduction}
\label{intro}
We only consider simple graphs and simple binary matroids in this paper. Our notation and terminology will follow \cite{ox1}.
Ehrenfeucht et al. consider three natural operations on a graph $G=(V,E)$ in \cite{aer04}. Let $K_V$ be the complete graph on the vertex set $V$. The \textbf{complement} $\omega(G)$ of $G$ is $(V, E(K_V)-E)$. Let $x$ be a vertex of $G$ and $E_x(G)$ be the set of edges of $G$ meeting $x$. The graph $\sigma_x(G)$, the \textbf{switching} of $G$ at $x$, is $(V, E \bigtriangleup E_x(K_V))$. Thus, at $x$, we interchange edges and non-edges. Let $N_G(x)$ be the set of neighbours of $x$. The \textbf{local complementation} $\lambda_x(G)$ of $G$ at $x$ is $(V, E \bigtriangleup E(K_{N_G(x)}))$, that is, in the neighbourhood of $x$, we interchange edges and non-edges. 

Ehrenfeucht et al.\cite{aer04} showed that complementation can be obtained by a sequence of operations of switching and local complementation. Their main result is the following.

\begin{theorem}
\label{maingraph}
Every graph on the vertex set $V$ can be obtained from $K_V$ via a sequence of switchings and local complementations.
\end{theorem}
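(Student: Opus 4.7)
I would prove Theorem~\ref{maingraph} by induction on $|V|$. The cases $|V| \le 2$ are immediate: when $|V| \le 1$ there is nothing to check, and when $|V| = 2$ a single switching at either vertex turns $K_V$ into the edgeless graph, so both graphs on $V$ are reached.

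For the inductive step, fix an arbitrary graph $G$ on $V$ and a vertex $v \in V$. The inductive hypothesis, applied to $V - v$, produces a sequence of switchings and local complementations on $V - v$ that transforms $K_{V-v}$ into the induced subgraph $G - v$. I would execute this same sequence on $K_V$, reinterpreting each operation as one performed on the full vertex set $V$. The result is a graph $H$ with $H - v = G - v$, but with $N_H(v)$ in general different from $N_G(v)$. What remains is to modify $N_H(v)$ into $N_G(v)$ while restoring (or not disturbing) $H - v$.

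To that end I would catalogue how each operation affects the pair $(N_v, H - v)$: switching at $v$ complements $N_v$ inside $V - v$ and fixes $H - v$; local complementation at $v$ fixes every edge at $v$ but flips the edges of $H - v$ lying inside $N_v$; switching at $u \in V - v$ toggles just $u$'s membership in $N_v$ while altering $H - v$ predictably; and local complementation at $u \in V - v$ affects $N_v$ only when $uv \in E(H)$. Equipped with switchings at vertices of $V - v$, I can toggle the elements of $N_v$ one by one to match $N_G(v)$, and then repair the collateral damage to $H - v$ by a further application of the inductive hypothesis (using that each of our operations is an involution, so the inductive hypothesis gives reachability in both directions).

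The main obstacle is the interaction between these two phases: when I correct $H - v$ by operations on $V - v$, those operations may themselves re-perturb $N_v$. I would address this either by (i) tracking the defect $d := |N_H(v) \bigtriangleup N_G(v)|$ and arranging the adjustments and repairs so that $d$ strictly decreases with each outer iteration, or by (ii) strengthening the inductive hypothesis to guarantee a sequence reaching $G - v$ from $K_{V-v}$ whose extension to $V$ preserves any prescribed $N_v$. Route (ii) would decouple the two stages cleanly and is, I suspect, the shortest path to a rigorous proof.
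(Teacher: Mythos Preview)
The paper does not prove Theorem~\ref{maingraph}. It is quoted as the main result of Ehrenfeucht, Harju, and Rozenberg~\cite{aer04} and is stated without argument, so there is nothing in the paper to compare your attempt against.

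On the merits of the proposal itself: the inductive scaffolding and your catalogue of how each operation acts on the pair $(N_v,\,H-v)$ are correct, but the plan stops exactly at the point you flag as the main obstacle, and neither of your two suggested resolutions is actually carried out. Route~(i) needs a concrete mechanism forcing the defect $d=|N_H(v)\bigtriangleup N_G(v)|$ to strictly decrease after a repair pass; nothing in your catalogue makes that automatic, since the repair sequence supplied by the inductive hypothesis may involve local complementations at neighbours of $v$ and switchings at vertices of $V-v$, each of which can move $N_v$ arbitrarily far from its target. Route~(ii), taken literally, asks that for every target pair $(G-v,\,N)$ there is a sequence of operations at vertices of $V-v$ taking $(K_{V-v},\,N_{K_V}(v))$ to $(G-v,\,N)$; since a graph on $V$ is precisely such a pair, this is the assertion that operations at $V-v$ alone already act transitively on all graphs on $V$, which is strictly stronger than the theorem you are trying to prove and is not obviously amenable to the same inductive scheme without reproducing the circularity. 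So what you have is a reasonable opening move, but the essential difficulty is identified rather than resolved.
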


In this paper, we try to generalize this theorem to binary matroids. Just as every $n$-vertex simple graph is a subgraph of $K_n$, every simple binary matroid of rank at most $r$ is a restriction of $PG(r-1,2)$. Moreover, the three graph operations defined above have natural analogues for binary matroids.

Throughout this paper, we denote the rank-$r$ binary projective geometry ${PG(r-1,2)}$ by $P_r$. We call cocircuits and hyperplanes of $P_r$ \textbf{projective cocircuits} and \textbf{projective hyperplanes}, respectively. For a given binary matroid $M$, we fix a binary projective geometry $P_r$ of which $M$ is a restriction.  The \textbf{complement} $\omega(M)$ of $M$ in $P_r$ is the matroid on the ground set $E(M) \bigtriangleup E(P_r)$, that is, we change the elements of $P_r$ present in the ground set of $M$ to non-elements and vice versa. Observe that the switching of the graph $G$ with respect to a vertex $x$ is obtained by complementing inside the vertex bond of $K_V$ at $x$. Since bonds in graphs correspond to cocircuits in matroids, a natural binary-matroid generalization of the switching operation in graphs is to complement inside a cocircuit of $P_r$. The \textbf{switching}  $\sigma_{C^*}(M)$ of $M$ in $P_r$ with respect to a cocircuit $C^*$ of $P_r$ is the matroid on the ground set $E(M) \bigtriangleup C^*$, that is, we change the elements of $C^*$ present in the ground set of $M$ to non-elements and vice versa. The \textbf{local complementation} $\lambda_{C^*}(M)$ of $M$ in $P_r$ with respect to a projective cocircuit $C^*$ is the matroid on the ground set $E(M) \bigtriangleup ({\rm{cl}}_{P_r}(C^* \cap E(M)) - C^*)$, that is, we complement inside ${\rm{cl}}_{P_r}(C^* \cap E(M)) - C^*$, where ${\rm{cl}}_{P_r}$ denotes projective closure, that is, closure in $P_r$. For example, $\lambda_{C^*}(P_r)$ is the rank-$r$ binary affine geometry, $AG(r-1,2)$, which we will write as $A_r$ here.

In Section 3, we shall observe that the operations of complementation and switching commute with each other and that a composition of switchings is a switching. We use these observations to characterize the matroids in the same orbit as $P_r$ under the action of the operations of switching and complementation. In Section 4,  we show that the switchings with respect to row-cocircuits of  some fixed standard representation of $P_r$ generate all switchings.

In Section 5, we show that complementation can be written in terms of switchings and local complementations and that not all binary matroids of rank at most $r$ can be obtained from $P_r$ via a sequence of the operations of complementation, switching, and local complementation.

In Section 6, we introduce the pointed-swap operation and show that this new operation along with the operations of switching and complementation are enough to transform $P_r$ into any binary matroid of rank at most $r$. Moreover, we show that any non-empty binary matroid other than $U_{1,1}$ can be obtained from $P_r$ via the operations of local complementation and pointed swaps.

\section{Preliminaries}
\label{prelim}

Let $\mathcal{M}_r$ denote the set of all binary matroids that are restrictions of a fixed copy of $P_r$ and let Sym$(\mathcal{M}_r)$ denote the symmetric group {\color{black} on} $\mathcal{M}_r$.  For any subset $A$ of $E(P_r)$ and a matroid $M \in \mathcal{M}_r$, let $M \triangle A$ denote the matroid on the ground set $E(M) \bigtriangleup A$, the symmetric difference of $E(M)$ and $A$. With the above notation, our three operations with respect to $P_r$ can be written in the following way:

\begin{enumerate}
    \item {\bf Complementation:} $\omega(M)=M \triangle E(P_r)$.
    \item {\bf Switching:} $\sigma_{C^*}(M) = M \triangle C^* $.
    \item {\bf Local Complementation:} $\lambda_{C^*}(M) = M \triangle           [{\rm{cl}}_{P_r}(E(M) \cap C^*) - C^*]$.
\end{enumerate}

From the above notation, it is clear that $\omega^2, \sigma_{C^*}^2,$ and $\lambda_{C^*}^2$ are all identity operations, that is, $\omega^2(M)=\sigma_{C^*}^2(M)= \lambda_{C^*}^2(M)=M$. {\color{black} Note that the operations of switching, complementation, and local complementation generate a subgroup  of Sym$(\mathcal{M}_r)$.} The following lemma is an immediate consequence of the fact that the operation of symmetric difference is commutative. It implies that the operations of complementation and switching commute.

\begin{lemma}
\label{symdiff}
For a binary matroid $M$ of rank at most $r$ and $X, Y \subseteq E(P_r)$, 
$$M \triangle (X \bigtriangleup Y) = (M \triangle Y) \triangle X.$$
\end{lemma}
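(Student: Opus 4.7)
The plan is to unpack the definition of $\triangle$ and reduce the claim to a purely set-theoretic identity. By construction, $M \triangle A$ denotes the matroid whose ground set is $E(M) \bigtriangleup A$, and since every matroid under consideration is a restriction of the fixed $P_r$, the matroid structure is completely determined by its ground set. Hence two such matroids $M \triangle A$ and $M \triangle B$ coincide as matroids if and only if $E(M) \bigtriangleup A = E(M) \bigtriangleup B$ as subsets of $E(P_r)$.

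First I would make this reduction explicit: the claim $M \triangle (X \bigtriangleup Y) = (M \triangle Y) \triangle X$ is equivalent to the set equality
$$E(M) \bigtriangleup (X \bigtriangleup Y) \;=\; \bigl(E(M) \bigtriangleup Y\bigr) \bigtriangleup X.$$
Second, I would verify this set equality by appealing to associativity and commutativity of symmetric difference. The cleanest justification is via the bijection that sends each subset $A \subseteq E(P_r)$ to its characteristic vector $\chi_A \in \mathbb{F}_2^{E(P_r)}$: under this identification, $\bigtriangleup$ corresponds to vector addition, which is associative and commutative, so
$$\chi_{E(M)} + (\chi_X + \chi_Y) \;=\; (\chi_{E(M)} + \chi_Y) + \chi_X,$$
and translating back gives the desired equality of subsets.

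There is no real obstacle here; the only point that genuinely needs to be mentioned is that, in the setting of $\mathcal{M}_r$, a matroid is determined by its ground set alone, so set equality suffices to conclude matroid equality. Everything else is the standard fact that symmetric difference is an abelian group operation on the power set of $E(P_r)$.
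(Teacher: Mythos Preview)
Your proposal is correct and follows essentially the same approach as the paper, which simply notes that the lemma is an immediate consequence of the commutativity (and implicitly associativity) of symmetric difference. Your version is more explicit about the reduction to ground sets and the abelian-group structure on the power set, but the underlying idea is identical.
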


The following is a straightforward consequence of the last lemma. 

\begin{lemma}
\label{hypcomp}
Let $M$ be a restriction of $P_r$ and $C^*$ be a projective cocircuit. Then $\omega \sigma_{C^*}(M) = M \triangle (E(P_r)-C^*)$.

\end{lemma}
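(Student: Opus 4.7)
The plan is to unwind the two operations using their definitions in terms of the $\triangle$ notation and then apply Lemma \ref{symdiff} to collapse the resulting iterated symmetric difference into a single one.

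More specifically, starting from the left-hand side, I would first expand the switching as $\sigma_{C^*}(M) = M \triangle C^*$, and then apply complementation to obtain $\omega \sigma_{C^*}(M) = (M \triangle C^*) \triangle E(P_r)$. By Lemma \ref{symdiff} (with $X = E(P_r)$ and $Y = C^*$), this equals $M \triangle (C^* \bigtriangleup E(P_r))$.

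The final step is to observe that because $C^*$ is a cocircuit of $P_r$, it is in particular a subset of $E(P_r)$, so the symmetric difference $C^* \bigtriangleup E(P_r)$ coincides with the ordinary set difference $E(P_r) - C^*$. Substituting gives $\omega \sigma_{C^*}(M) = M \triangle (E(P_r) - C^*)$, as claimed.

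There is no real obstacle here: the lemma is essentially a bookkeeping consequence of Lemma \ref{symdiff} together with the set-theoretic identity $A \bigtriangleup B = B - A$ when $A \subseteq B$. The only thing to be careful about is making explicit that $C^* \subseteq E(P_r)$ so that the symmetric difference really does simplify to a set difference.
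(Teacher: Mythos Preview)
Your proof is correct and is exactly the argument the paper has in mind: the paper states the lemma as ``a straightforward consequence of the last lemma'' without spelling it out, and your expansion via Lemma~\ref{symdiff} together with the observation that $C^* \subseteq E(P_r)$ forces $C^* \bigtriangleup E(P_r) = E(P_r) - C^*$ is precisely that straightforward consequence.
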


We shall refer to the operation $M \triangle (E(P_r)-C^*)$ as \textbf{complementation inside the projective hyperplane} $E(P_r)-C^*$. It may appear that the matroid generalization of the switching operation is stronger than the switching operation for graphs since, in graphs, we do complementation inside a vertex bond of the complete graph while, in matroids we do complementation inside any projective cocircuit. The following well-known result shows that this is not the case.

\begin{lemma}
\label{switch1}
A set $B$ of edges of a complete graph $K_V$ on $V$ is a bond if and only if it can be written as {\color{black} the  symmetric difference of some set of} vertex bonds of $K_V$.
\end{lemma}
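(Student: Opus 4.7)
The plan is to parametrize bonds of $K_V$ by vertex bipartitions and then use a single parity computation to handle both directions.

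First, I would recall the standard description of bonds in a connected graph: in any connected graph $G=(V,E)$, a set $B\subseteq E$ is a bond precisely when $B=\delta(S):=\{uv\in E:|\{u,v\}\cap S|=1\}$ for some $\emptyset\neq S\subsetneq V$ such that both $G[S]$ and $G[V\setminus S]$ are connected. Since every induced subgraph of $K_V$ on a nonempty vertex set is itself complete, this connectivity condition is automatic, so the bonds of $K_V$ are exactly the sets $\delta(S)$ with $\emptyset\neq S\subsetneq V$. In particular, the vertex bond at $x$ is $\delta(\{x\})$.

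The heart of the argument is the identity
\[
\delta(S) \;=\; \bigtriangleup_{x\in S}\delta(\{x\}),
\]
valid for every $S\subseteq V$. To verify it, I would fix an edge $e=uv$ and observe that the number of $x\in S$ for which $e\in\delta(\{x\})$ equals $|\{u,v\}\cap S|$; hence $e$ lies in the right-hand side iff this intersection has odd size iff $e\in\delta(S)$. Applied to a bond $B=\delta(S)$ with $\emptyset\neq S\subsetneq V$, this exhibits $B$ as a symmetric difference of vertex bonds, giving the forward direction.

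For the converse, suppose $B$ is the symmetric difference of the vertex bonds indexed by some $T\subseteq V$. The same identity yields $B=\delta(T)$. Since $B$ is a (necessarily nonempty) bond, we must have $\emptyset\neq T\subsetneq V$, and then $\delta(T)$ is a bond of $K_V$ by the first step. The only genuinely delicate point is the degenerate case $T=V$, which produces the empty symmetric difference; this is excluded by the convention that bonds are nonempty, so no real obstacle arises. Overall, the proof is essentially bookkeeping once the parity identity is in hand.
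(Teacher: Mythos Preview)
The paper states this lemma as well-known and gives no proof, so there is nothing in the paper to compare against. Your approach via the identity $\delta(S)=\bigtriangleup_{x\in S}\delta(\{x\})$ is the standard one, and the forward direction is clean.

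Your converse, however, is phrased circularly: you write ``since $B$ is a (necessarily nonempty) bond'' before you have established that $B$ is a bond, then use this to force $\emptyset\neq T\subsetneq V$, and finally conclude that $B$ is a bond. The honest situation is that the lemma as literally stated has a harmless imprecision: taking $T=V$ (or $T=\emptyset$) gives $\delta(T)=\emptyset$, which is a symmetric difference of vertex bonds but not a bond. The correct fix is to take $B\neq\emptyset$ as the working hypothesis (either implicit in ``a set $B$ of edges'' or in the reading of ``some set of vertex bonds''); then from $B=\delta(T)\neq\emptyset$ you get $\emptyset\neq T\subsetneq V$ directly, and hence $B$ is a bond by your first paragraph. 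You evidently see the issue in your final sentence, but you should separate the nonemptiness \emph{assumption} from the conclusion you are proving rather than invoking the latter to justify the former.
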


This implies that we can do complementation inside any bond of the complete graph via a sequence of switchings.

\section{Switching and Complementation}

In this section, we characterize all the matroids obtainable from $P_r$ using the operations of complementation and switching.

The following result is known for graphs \cite{harju1, hage1, seid}.

\begin{theorem}
The graphs obtainable from a complete graph on $n$ vertices using the operations of complementation and switching consist of all complete bipartite graphs on $n$ vertices together with their complements.
\end{theorem}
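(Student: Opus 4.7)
The plan is to identify the stated family as exactly the orbit of $K_n$ under the two operations by proving two things: (i) the family is closed under switching and complementation, and (ii) every member is reachable from $K_n$. The characterization then follows at once.

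First I would verify the central bookkeeping computation: switching at a vertex $x \in A$ in the complete bipartite graph $K_{A,B}$ produces $K_{A \setminus \{x\},\, B \cup \{x\}}$. Directly from the definitions, the edges of $K_{A,B}$ incident to $x$ are exactly the pairs $xy$ with $y \in B$; after switching, these vanish and the pairs $xy$ with $y \in A \setminus \{x\}$ appear, while the edges between $A \setminus \{x\}$ and $B$ are unchanged. The new edge set is precisely the set of pairs with one endpoint in $A \setminus \{x\}$ and the other in $B \cup \{x\}$. An analogous check shows that switching at $x \in A$ in $\overline{K_{A,B}}$ yields $\overline{K_{A \setminus \{x\},\, B \cup \{x\}}}$; by symmetry the corresponding identities hold when $x \in B$. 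Complementation visibly swaps the two families $\{K_{A,B}\}$ and $\{\overline{K_{A,B}}\}$. Hence the union is closed under both operations.

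For reachability, note that $K_n = \overline{K_{V,\emptyset}}$ already lies in the family, once we allow degenerate bipartitions with one empty part. Starting from $K_n$, the switching identity above lets us move any vertex from one part to the other, so a sequence of at most $n$ single-vertex switchings realizes $\overline{K_{A,B}}$ for any desired bipartition $(A,B)$ of $V$; a final complementation, if we wish, yields $K_{A,B}$. Combined with (i), this pins down the orbit exactly and proves the theorem. There is no substantial obstacle: the whole argument reduces to the one-line computation above, and the only minor subtlety is handling the degenerate bipartition $(V,\emptyset)$ so that $K_n$ itself sits in the family.
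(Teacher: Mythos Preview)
Your argument is correct: the closure check (switching at $x \in A$ sends $K_{A,B}$ to $K_{A\setminus\{x\},\,B\cup\{x\}}$, and the same holds for complements since switching and complementation commute) together with the observation that $K_n = \overline{K_{V,\emptyset}}$ and that repeated switches realize any bipartition, pins down the orbit exactly.

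However, there is nothing to compare your approach against: the paper does not prove this theorem. It is stated as a known result for graphs, with citations to Ehrenfeucht--Harju--Rozenberg, Hage, and Seidel, and is used only as motivation for the analogous binary-matroid statement (Proposition~\ref{switch and complement}). Your self-contained proof is therefore more than the paper provides, and the elementary direct computation you give is the natural way to establish it.
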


For binary matroids, we start with the following elementary result.

\begin{lemma}
\label{symm2}
Let $C_1^*$ and $C_2^*$ be two distinct cocircuits of $P_r$. Then $C_1^* \bigtriangleup C_2^*$ is also a cocircuit of $P_r$.
\end{lemma}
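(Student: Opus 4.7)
The plan is to identify the cocircuits of $P_r$ with the nonzero linear functionals on $\mathbb{F}_2^r$, and then read off the claim from linearity. Recall that the hyperplanes of $P_r = PG(r-1,2)$ are precisely the point sets $H_f = \{v \in \mathbb{F}_2^r \setminus \{0\} : f(v)=0\}$ as $f$ ranges over the nonzero elements of $(\mathbb{F}_2^r)^*$. Since a projective cocircuit is the complement of a projective hyperplane in $E(P_r)$, each cocircuit has the form $C_f^* = \{v \in \mathbb{F}_2^r \setminus \{0\} : f(v)=1\}$, and the assignment $f \mapsto C_f^*$ is a bijection between nonzero linear functionals and cocircuits of $P_r$.

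With this setup, suppose $C_1^* = C_{f_1}^*$ and $C_2^* = C_{f_2}^*$ are distinct cocircuits, so $f_1 \ne f_2$. A point $v$ lies in $C_1^* \bigtriangleup C_2^*$ exactly when $f_1(v) \ne f_2(v)$, equivalently when $(f_1 + f_2)(v) = 1$ in $\mathbb{F}_2$. Because $f_1 + f_2$ is a nonzero linear functional, $C_1^* \bigtriangleup C_2^* = C_{f_1+f_2}^*$, which is a cocircuit of $P_r$.

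The only real obstacle is being comfortable with the identification of cocircuits of $P_r$ with nonzero functionals; once that is in hand, the proof is a single line. If a more matroid-theoretic packaging is preferred, the same content can be phrased by noting that the symmetric difference of two cocircuits in any binary matroid lies in the cocycle space, and that the cocycle space of $P_r$ has dimension $r$, so it contains exactly $2^r - 1$ nonzero vectors, matching the $2^r - 1$ projective hyperplanes; every nonzero cocycle of $P_r$ must therefore be a single cocircuit, and in particular so is $C_1^* \bigtriangleup C_2^*$.
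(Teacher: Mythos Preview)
Your proof is correct. The paper's argument is different: it uses the standard fact that in a binary matroid the symmetric difference of two cocircuits is a disjoint union of cocircuits, and then finishes with a size count, observing that $|C_1^* \bigtriangleup C_2^*| = 2^{r-1}$ while every cocircuit of $P_r$ also has exactly $2^{r-1}$ elements, so the disjoint union consists of a single cocircuit. Your primary argument bypasses this by identifying cocircuits with nonzero linear functionals and reading off $C_1^* \bigtriangleup C_2^* = C_{f_1+f_2}^*$ directly; this is cleaner and makes the underlying structure explicit, at the small cost of invoking the coordinate description of $P_r$. Your alternative ``matroid-theoretic'' packaging is close in spirit to the paper's proof (both start from the cocycle space), but you conclude by counting nonzero cocycles versus cocircuits, whereas the paper concludes by counting elements inside $C_1^* \bigtriangleup C_2^*$.
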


\begin{proof}
Note that $C_1^* \bigtriangleup C_2^*$ is a disjoint union of cocircuits of $P_r$. Since $|C_1^* \bigtriangleup C_2^*| = 2^{r-1}$ and the number of elements in any projective cocircuit is $2^{r-1}$, we deduce that $C_1^* \bigtriangleup C_2^*$ is a cocircuit of $P_r$. 
\end{proof}

%Observe that the intersection of any two projective hyperplanes $H_1$ and $H_2$, is a rank-$r-2$ projective geometry $Z$. Moreover, $P_r$ has exactly one other projective hyperplane, $H_3$, containing $Z$.

Recall that we are denoting the operation of complementation by $\omega$. We use $\iota$ to denote the identity operator.

\begin{lemma}
\label{switch and complemetation 0}

 Every product of a sequence of the operations of complementation and switching is equal to $\iota$, to $\omega$, to $\sigma_{C^*}$ for some cocircuit $C^*$ of $P_r$, or to $\omega \sigma_{C^*}$.

\end{lemma}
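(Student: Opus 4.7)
The plan is to reduce every such product to a single symmetric-difference operation on ground sets, and then classify which sets can arise. By Lemma \ref{symdiff}, composing operations of the form $M \mapsto M \triangle X$ yields $M \mapsto M \triangle (X_1 \bigtriangleup X_2 \bigtriangleup \cdots \bigtriangleup X_n)$, where each $X_i$ is either $E(P_r)$ (from an $\omega$) or a projective cocircuit $C_i^*$ (from a $\sigma_{C_i^*}$). Thus any product of complementations and switchings acts on $M$ as $M \triangle X$, where
$$X = \varepsilon\, E(P_r) \bigtriangleup \bigl(C_{i_1}^* \bigtriangleup C_{i_2}^* \bigtriangleup \cdots \bigtriangleup C_{i_k}^*\bigr),$$
with $\varepsilon \in \{0,1\}$ recording the parity of the number of complementations and the $C_{i_j}^*$ being the cocircuits used, in some order, by the switchings.

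The key step is then to show that the symmetric difference of any collection of projective cocircuits is either empty or itself a single projective cocircuit. I would prove this by induction on the number $k$ of cocircuits in the collection, using Lemma \ref{symm2}. The case $k=1$ is trivial. For the inductive step, assume $C_{i_1}^* \bigtriangleup \cdots \bigtriangleup C_{i_{k-1}}^*$ is either $\emptyset$ or a projective cocircuit $D^*$. Taking symmetric difference with $C_{i_k}^*$ gives $C_{i_k}^*$ in the first case, and either $\emptyset$ (if $D^* = C_{i_k}^*$) or another projective cocircuit (if $D^* \neq C_{i_k}^*$, by Lemma \ref{symm2}) in the second.

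Combining the two observations, $X$ must be one of the four sets $\emptyset$, $E(P_r)$, $C^*$, or $E(P_r) \bigtriangleup C^* = E(P_r) - C^*$, for some projective cocircuit $C^*$. These correspond, respectively, to the operations $\iota$, $\omega$, $\sigma_{C^*}$, and $\omega\sigma_{C^*}$, the last identification being exactly Lemma \ref{hypcomp}. There is no real obstacle here; the only thing to watch is that the induction handles both possibilities (cancellation to $\emptyset$ versus merging via Lemma \ref{symm2}) at each stage, and that the four resulting sets are genuinely distinct when $C^* \neq \emptyset$, so the four listed operators cover every composite and the case analysis is complete.
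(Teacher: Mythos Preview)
Your proof is correct and follows essentially the same approach as the paper's: both reduce the problem to showing that any symmetric difference of projective cocircuits is empty or again a projective cocircuit (via Lemma~\ref{symm2}), then use commutativity and the order-two property of $\omega$ and the $\sigma_{C^*}$ to collapse the product. The paper's version is simply terser, writing $\sigma_{C_1^*}\sigma_{C_2^*}\cdots\sigma_{C_k^*}=\sigma_{C_1^*\bigtriangleup\cdots\bigtriangleup C_k^*}$ and invoking commutativity directly, whereas you spell out the induction and the four-case classification explicitly.
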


\begin{proof}
Observe that $\sigma_{C_1^*}\sigma_{C_2^*} \dots \sigma_{C_k^*}=\sigma_{C_1^* \bigtriangleup C_2^* \bigtriangleup \dots \bigtriangleup C_k^*}$. Since the operations of complementation and switching commute, and both have order two, the result follows.

\end{proof}

This lemma immediately implies the following result.

\begin{proposition}
\label{switch and complement}

The binary matroids obtainable from $P_r$ using the operations of complementation and switching are all of the matroids that are isomorphic to one of $P_r$, $U_{0,0}$, $P_{r-1}$, and $A_{r}$.
\end{proposition}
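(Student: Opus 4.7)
The plan is to apply Lemma~\ref{switch and complemetation 0} directly: any matroid obtainable from $P_r$ by a sequence of complementations and switchings equals $\iota(P_r)$, $\omega(P_r)$, $\sigma_{C^*}(P_r)$, or $\omega\sigma_{C^*}(P_r)$ for some projective cocircuit $C^*$. I would then compute each of these four cases, relying on standard facts about $P_r$.

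First, $\iota(P_r) = P_r$ trivially. Next, by the definition of complementation, $\omega(P_r) = P_r \triangle E(P_r)$, whose ground set is empty, so this matroid is $U_{0,0}$. For the switching operation, $\sigma_{C^*}(P_r) = P_r \triangle C^* = E(P_r) - C^*$; since the complement of a cocircuit in $P_r$ is a projective hyperplane, the resulting matroid is a hyperplane of $P_r$, which is isomorphic to $P_{r-1}$. Finally, by Lemma~\ref{hypcomp}, $\omega\sigma_{C^*}(P_r) = P_r \triangle (E(P_r) - C^*) = C^*$, which as a restriction of $P_r$ is $AG(r-1,2) = A_r$ (a fact already recorded in the introduction when $\lambda_{C^*}(P_r) = A_r$ was noted, since $\mathrm{cl}_{P_r}(C^*) - C^* = E(P_r) - C^*$).

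This shows the matroids produced are contained in $\{P_r, U_{0,0}, P_{r-1}, A_r\}$ up to isomorphism. Conversely, each of these four matroids is actually produced by taking the identity, complementation, switching at any fixed projective cocircuit $C^*$, and the composition $\omega\sigma_{C^*}$, respectively.

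There is no real obstacle: the entire argument reduces to the structural Lemma~\ref{switch and complemetation 0}, plus the observation that a projective hyperplane of $P_r$ is isomorphic to $P_{r-1}$ and a projective cocircuit of $P_r$ (as a restriction) is isomorphic to $A_r$. The only minor subtlety worth mentioning is that, although there are many distinct cocircuits $C^*$, they are all equivalent under the automorphism group of $P_r$, so the isomorphism type of the resulting matroid is independent of the choice of $C^*$; this justifies listing only four isomorphism classes rather than one per cocircuit.
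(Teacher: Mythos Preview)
Your proposal is correct and follows exactly the same approach as the paper, which simply states that the proposition is an immediate consequence of Lemma~\ref{switch and complemetation 0}. You have merely filled in the explicit computation of the four cases, which the paper leaves to the reader.
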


By combining this result with Lemma \ref{hypcomp}, we obtain the following.

\begin{corollary}
\label{switch and complement gen}
 Two distinct binary matroids $M_1$ and $M_2$ are in the same orbit under the action of the group generated by the operations of switching and complementation on $\mathcal{M}_r$ if and only if one of the following holds:
 
 \begin{enumerate}[label=(\roman*)]
     \item $M_1$ can be obtained from $M_2$ via complementation inside a cocircuit of $P_r$;
     
     \item $M_1$ can be obtained from $M_2$ via  complementation inside a hyperplane of $P_r$; or
     
     \item $M_1$ can be obtained from $M_2$ via complementation inside $P_r$.
 \end{enumerate}

\end{corollary}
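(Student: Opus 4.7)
The proof plan is to read the statement straight off of Lemma \ref{switch and complemetation 0} and Lemma \ref{hypcomp}. By Lemma \ref{switch and complemetation 0}, the group generated by complementation and switching, acting on $\mathcal{M}_r$, consists entirely of operators of the form $\iota$, $\omega$, $\sigma_{C^*}$, or $\omega\sigma_{C^*}$ for some projective cocircuit $C^*$. Thus $M_1$ and $M_2$ lie in the same orbit if and only if $M_1=\phi(M_2)$ for some $\phi$ of one of these four types. The hypothesis $M_1\neq M_2$ rules out $\phi=\iota$, so exactly three cases remain.

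For the forward direction, I would handle those three cases in turn. If $\phi=\sigma_{C^*}$ for some projective cocircuit $C^*$, then $M_1=M_2\triangle C^*$, which is the complementation of $M_2$ inside a projective cocircuit, giving (i). If $\phi=\omega$, then $M_1=M_2\triangle E(P_r)$, i.e.\ complementation of $M_2$ inside $P_r$, giving (iii). If $\phi=\omega\sigma_{C^*}$, Lemma \ref{hypcomp} gives $M_1=M_2\triangle (E(P_r)-C^*)$, which is complementation of $M_2$ inside the projective hyperplane $E(P_r)-C^*$, giving (ii).

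For the converse, I would observe that each of (i), (ii), and (iii) corresponds to applying a non-identity element of the group to $M_2$: (i) is $\sigma_{C^*}$, (iii) is $\omega$, and (ii) is $\omega\sigma_{C^*}$ by another application of Lemma \ref{hypcomp} (writing the hyperplane as $E(P_r)-C^*$ for the complementary cocircuit $C^*$, whose existence is immediate since the complement of a projective hyperplane in $P_r$ is a projective cocircuit). In each case $M_1$ is obtained from $M_2$ by an element of the group, so $M_1$ and $M_2$ are in the same orbit.

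There is no real obstacle here; the only minor care needed is to confirm that the group-theoretic list in Lemma \ref{switch and complemetation 0} is matched bijectively with the three geometric descriptions in (i)--(iii), which is exactly what Lemma \ref{hypcomp} supplies for the mixed term $\omega\sigma_{C^*}$.
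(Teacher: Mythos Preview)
Your proposal is correct and is essentially the same argument the paper has in mind: the corollary is stated as an immediate consequence of Lemma~\ref{switch and complemetation 0} (via Proposition~\ref{switch and complement}) together with Lemma~\ref{hypcomp}, and your write-up simply unpacks that derivation explicitly.
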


\section{Local complementation and switching with respect to row cocircuits}

In this section, let $M$ denote a restriction of $P_r$. For local complementation in $M$, we do the following in order:

\begin{enumerate}[label=(\roman*)]

    \item Fix a cocircuit $C^*$ of $P_r$.
    \item Find the intersection $D$ of $C^*$ with  $E(M)$.
    \item Find the projective closure ${\rm{cl}}_{P_r}(D)$ of $D$.
    \item Complement inside ${\rm{cl}}_{P_r}(D)- C^*$.
\end{enumerate}

Note that ${\rm{cl}}_{P_r}(D)- C^*$ is a flat of $P_r$ and {\color{black} is} therefore  a smaller projective geometry. Thus local complementation in a matroid takes the complement inside a smaller projective geometry {\color{black} just as,  in} a graph, local complementation takes the complement inside a smaller complete graph. 

Next, we show that, {\color{black} in performing a local complementation in $M$, we can focus initially on} any disjoint union of cocircuits of $M$.

\begin{lemma}
\label{localcomp1}
Let $D$ be a {\color{black} non-empty} disjoint union of cocircuits of $M$. Then there is a projective cocircuit $C^*$ such that $C^* \cap E(M) = D$.
\end{lemma}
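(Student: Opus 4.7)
The plan is to translate the statement into linear algebra over $\mathbb{F}_2$ using the standard coordinate model of $P_r$. Identify $E(P_r)$ with $\mathbb{F}_2^r\setminus\{0\}$ so each element $e$ is a nonzero vector $v_e$. Under this identification, the projective cocircuits of $P_r$ are exactly the sets
\[
C^*_w \;=\; \{v \in E(P_r) : w \cdot v = 1\}
\]
as $w$ ranges over $\mathbb{F}_2^r\setminus\{0\}$; every such set has size $2^{r-1}$ because $w\cdot v = 1$ on exactly half of $\mathbb{F}_2^r$.

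Now first I would identify the cocircuit space of $M$. Since $M = P_r|E(M)$ is a binary matroid, its cocircuit space, i.e.\ the $\mathbb{F}_2$-span of the indicator vectors of cocircuits of $M$ (equivalently, the set of symmetric differences of cocircuits, which for disjoint unions are the same thing) coincides with the row space of any $\mathbb{F}_2$-representation of $M$. Using the representation of $M$ inherited from $P_r$, that row space is precisely
\[
\bigl\{\,(w\cdot v_e)_{e\in E(M)} : w \in \mathbb{F}_2^r\,\bigr\}.
\]
Consequently a subset $D\subseteq E(M)$ is a disjoint union of cocircuits of $M$ if and only if $D = \{e \in E(M) : w\cdot v_e = 1\}$ for some $w \in \mathbb{F}_2^r$.

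Given this set-up, the lemma falls out in one line. For the given non-empty $D$, pick any $w \in \mathbb{F}_2^r$ satisfying $D = \{e \in E(M) : w \cdot v_e = 1\}$. Because $D$ is non-empty, $w\cdot v_e = 1$ for some $e$, forcing $w \neq 0$. Then $C^* := C^*_w$ is a projective cocircuit of $P_r$, and directly from the definitions,
\[
C^* \cap E(M) \;=\; \{e \in E(M) : w \cdot v_e = 1\} \;=\; D.
\]

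The only substantive step is the identification of the cocircuit space of the restriction $M$ with the image of $\mathbb{F}_2^r$ under the evaluation map $w \mapsto (w\cdot v_e)_e$; this is standard binary-matroid duality and deserves one sentence of justification but nothing more. No case analysis or calculation beyond this is needed, so I expect no real obstacle.
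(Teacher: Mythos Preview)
Your argument is correct and is, at heart, the same idea as the paper's, just phrased in linear-algebraic rather than set-theoretic terms. The paper writes $M = P_r\backslash T$, lifts each individual cocircuit $D_i^*$ of $M$ to a projective cocircuit $C_i^*$ with $D_i^* = C_i^* - T$, and then notes that $D = (C_1^*\bigtriangleup\cdots\bigtriangleup C_k^*) - T$, using Lemma~\ref{symm2} to conclude that this symmetric difference is again a projective cocircuit. You instead identify the entire cocircuit space of $M$ at once as $\{(w\cdot v_e)_{e\in E(M)} : w\in\mathbb{F}_2^r\}$ and read off the desired $C^* = C^*_w$ directly. Both routes encode the same fact---that the cocircuit space of a restriction is the restriction of the cocircuit space of $P_r$; yours is marginally slicker but leans on the row-space description, while the paper's reuses its own Lemma~\ref{symm2} and stays at the level of set manipulations.
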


\begin{proof}
Observe that $M = P_r \ba T$ for some $T \subseteq E(P_r)$. Let $D$ be the disjoint union of cocircuits $D_1^*, D_2^*, \dots, D_k^*$ of $M$. Note that, for all $1 \leq i \leq k$,  there is a projective cocircuit $C_i^*$ of $P_r$ such that $D_i^* = C_i^* - T$. Then $D = D_1^* \bigtriangleup D_2^* \bigtriangleup \dots \bigtriangleup D_k^* = (C_1^* - T) \bigtriangleup (C_2^* - T) \bigtriangleup \dots \bigtriangleup (C_k^*-T) = (C_1^* \bigtriangleup C_2^* \bigtriangleup \dots \bigtriangleup C_k^*) - T$. The result follows.
\end{proof}

{\color{black} In view of this lemma, every local complementation in $M$ can be achieved as follows: 

\begin{enumerate}[label=(\roman*)]

    \item Find a non-empty disjoint union $D$ of cocircuits in $M$.  
    \item Find ${\rm{cl}}_{P_r}(D)$.
    \item Find a   hyperplane $H$ of  ${\rm{cl}}_{P_r}(D)$ avoiding $D$.
    \item Complement inside $H$.
\end{enumerate}}

{\color{black} Switchings in the graph case are only done} with respect to the vertex bonds. {\color{black} However, by Lemma~\ref{switch1},} via those switchings, we can complement inside any arbitrary bond of the complete graph. {\color{black} In the matroid case, we have been allowing switchings relative to any projective cocircuit. Next, we note that, just as for graphs, we can restrict ourselves to doing switchings with respect to a small number of cocircuits.}

Fix a standard representation $[I_r|D]$ of $P_r$ with respect to the basis $B$. Row $i$ of this matrix is the incidence vector of a cocircuit of $P_r$, a fundamental cocircuit with respect to the cobasis $E(P_r)-B$. We call such a cocircuit a \textbf{row-cocircuit}. The following lemma is well-known (see, for example, \cite{ox1} Proposition 9.2.2).

%Let $\{C_1^*, \ldots C_r^*\}$ be the set of row cocircuits of $P_r$ for a fixed standard representation $A$ of $P_r$.

\begin{lemma}
\label{rowcocircuit1}
Every cocircuit $C$ of $P_r$ can be written as a symmetric difference of row-cocircuits.
\end{lemma}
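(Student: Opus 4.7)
The plan is to identify elements of $P_r$ with nonzero vectors in $\mathbb{F}_2^r$ via the standard representation $[I_r \mid D]$, interpret cocircuits as level sets of nonzero linear functionals, and then verify the decomposition by a direct parity argument.

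First I would set up the coordinatization: with respect to the basis $B=\{b_1,\ldots,b_r\}$, each element $x$ of $P_r$ is represented by a nonzero column vector $x=(x_1,\ldots,x_r)^T \in \mathbb{F}_2^r$, where $b_i$ corresponds to the standard basis vector $e_i$. The hyperplanes of $P_r$ are precisely the sets $H_v=\{x \in E(P_r): v\cdot x=0\}$ as $v$ ranges over nonzero vectors in $\mathbb{F}_2^r$, so the cocircuits are exactly the complementary sets
\[
C^*_v=\{x\in E(P_r): v\cdot x=1\}.
\]
The row-cocircuit associated with row $i$ of $[I_r\mid D]$ is the fundamental cocircuit of $b_i$ with respect to $B$; inspecting the $i$-th row of the matrix shows it equals $\{x:x_i=1\}$, which is exactly $C^*_{e_i}$.

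Next, given an arbitrary cocircuit $C$ of $P_r$, I would write $C=C^*_v$ for some nonzero $v=(v_1,\ldots,v_r)\in\mathbb{F}_2^r$ and let $I=\{i:v_i=1\}$. For any $x\in E(P_r)$, the number of indices $i\in I$ with $x\in C^*_{e_i}$ equals $\sum_{i\in I}x_i$ modulo $2$, which is $v\cdot x$. Therefore $x$ lies in an odd number of the row-cocircuits $\{C^*_{e_i}:i\in I\}$ if and only if $v\cdot x=1$, i.e.\ if and only if $x\in C^*_v$. This gives the desired decomposition
\[
C=C^*_v=\bigtriangleup_{i\in I}\,C^*_{e_i},
\]
expressing $C$ as the symmetric difference of row-cocircuits.

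There is no real obstacle here; the only thing to be careful about is making the translation between the matroid language (cocircuits, fundamental cocircuits with respect to a cobasis) and the linear-algebra language (nonzero linear functionals on $\mathbb{F}_2^r$) explicit, so that the parity computation with symmetric differences over $\mathbb{F}_2$ can be cleanly applied.
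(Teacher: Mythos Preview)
Your argument is correct: identifying cocircuits of $P_r$ with the supports of nonzero linear functionals on $\mathbb{F}_2^r$ and then computing the parity of membership in the $C^*_{e_i}$ is exactly the standard way to see this, and every step goes through as you wrote it. In the paper this lemma is not proved at all but is simply quoted as well known with a reference to Oxley's \emph{Matroid Theory}, Proposition~9.2.2; your write-up supplies precisely the routine linear-algebra verification that such a citation is standing in for.
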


%\begin{proof}
%First, we show that for any subset $S = \{C_{i_1}^*, \ldots C_{i_s}^*\}\subseteq \{C_1^*, \ldots C_r^*\} $, the symmetric difference $C_{i_1}^* \bigtriangleup \ldots \bigtriangleup C_{i_s}^*$ is unique. Assume to the contrary that there are two subsets, $S = \{C_{i_1}^*, \ldots C_{i_s}^*\}$ and $T = \{C_{j_1}^*, \ldots C_{j_t}^*\} $ of $\{C_1^*, \ldots C_r^*\}$ such that $C_{i_1}^* \bigtriangleup \ldots \bigtriangleup C_{i_s}^* = C_{j_1}^* \bigtriangleup \ldots \bigtriangleup C_{j_t}^*$. This implies $C_{i_1}^* \bigtriangleup \ldots \bigtriangleup C_{i_s}^* \bigtriangleup C_{j_1}^* \bigtriangleup \ldots \bigtriangleup C_{j_t}^* = 0$. Since symmetric difference of two same sets is $\phi$,  we may assume that all $C_{i}^*$'s and $C_{j}^*$'s are distinct . Now, observe that the symmetric difference of row cocircuits $C_{j_1}^*, \ldots, C_{j_k}^*$ is $0$ if and only if the linear combination of rows $j_1, \ldots, j_k$ of the standard representation matrix $A$ is 0. This implies that the linear combination of rows $i_1, \ldots, i_s, j_1, \ldots, j_t$ of the standard representation matrix $A$ of $P_r$ is zero. This is a contradiction since $A$ has full row rank. This implies that each non-empty subset $S$ of $ \{C_1^*, \ldots C_r^*\}$ gives a unique cocircuit corresponding to it. Observe that there are $2^r - 1$ non-empty subsets of $\{C_1^*, \ldots C_r^*\}$, and there are $2^r - 1$ distinct cocircuits of $P_r$. This implies that any arbitrary cocircuit of $P_r$ can be written as a symmetric difference of row cocircuits.
%\end{proof}

This lemma has the following immediate consequence.

\begin{proposition}
\label{rowcocircuit2}
Switching with respect to an arbitrary cocircuit $C^*$ of $P_r$ can be expressed in terms of switchings with respect to row-cocircuits of $P_r$.
\end{proposition}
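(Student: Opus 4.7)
The plan is short because Lemma~\ref{rowcocircuit1} does essentially all of the work. My approach will be to start from an arbitrary projective cocircuit $C^*$ of $P_r$, invoke Lemma~\ref{rowcocircuit1} to obtain row-cocircuits $C_1^*, C_2^*, \ldots, C_k^*$ with $C^* = C_1^* \bigtriangleup C_2^* \bigtriangleup \cdots \bigtriangleup C_k^*$, and then realize $\sigma_{C^*}$ as the composition $\sigma_{C_1^*} \sigma_{C_2^*} \cdots \sigma_{C_k^*}$.

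The key identity I need is
$$\sigma_{C_1^*} \sigma_{C_2^*} \cdots \sigma_{C_k^*} = \sigma_{C_1^* \bigtriangleup C_2^* \bigtriangleup \cdots \bigtriangleup C_k^*},$$
which is the same identity already used in the proof of Lemma~\ref{switch and complemetation 0}. I would justify it by a brief induction on $k$: the base case $k=1$ is the definition, and the inductive step follows immediately from Lemma~\ref{symdiff}, since for any restriction $M$ of $P_r$,
$$\sigma_{C_1^*}\bigl(\sigma_{C_2^*\bigtriangleup \cdots \bigtriangleup C_k^*}(M)\bigr) = \bigl(M \triangle (C_2^* \bigtriangleup \cdots \bigtriangleup C_k^*)\bigr) \triangle C_1^* = M \triangle (C_1^* \bigtriangleup C_2^* \bigtriangleup \cdots \bigtriangleup C_k^*).$$

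Combining the two facts gives, for every $M \in \mathcal{M}_r$,
$$\sigma_{C_1^*} \sigma_{C_2^*} \cdots \sigma_{C_k^*}(M) = M \triangle C^* = \sigma_{C^*}(M),$$
which is the desired expression. I do not expect any genuine obstacle here; the proposition is really just a repackaging of Lemma~\ref{rowcocircuit1} in the language of switching operators, together with the commutativity and associativity of symmetric difference supplied by Lemma~\ref{symdiff}.
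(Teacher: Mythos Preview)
Your proposal is correct and matches the paper's approach exactly: the paper states this proposition as an immediate consequence of Lemma~\ref{rowcocircuit1} without giving any further argument, and the identity $\sigma_{C_1^*}\sigma_{C_2^*}\cdots\sigma_{C_k^*}=\sigma_{C_1^*\bigtriangleup C_2^*\bigtriangleup\cdots\bigtriangleup C_k^*}$ you invoke is precisely the one already recorded in the proof of Lemma~\ref{switch and complemetation 0}. Your write-up simply makes explicit what the paper leaves implicit.
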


\section{Not all binary matroids are obtainable}

In this section, we show that, unlike for graphs, we cannot obtain all binary matroids of rank at most $r$ from $P_r$ using the three generalized operations.

The following result shows that the effect on a matroid $M$ of a local complementation with respect to a projective cocircuit $C^*$ is the same as performing complementation in the complementary projective hyperplane $E(P_r) - C^*$ if $E(M) \cap C^*$ has rank $r$.

\begin{lemma}
\label{lc1}
Let $C^*$ be a cocircuit of $P_r$ and $M$ be {\color{black} a} restriction of $P_r$ such that $E(M) \cap C^*$ has rank $r$. Then
$$\lambda_{C^*}(M)= \omega\sigma_{C^*}(M).$$
\end{lemma}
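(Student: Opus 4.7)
The plan is to unfold both sides directly from the definitions and show they produce symmetric difference with the same set, with the rank hypothesis forcing that the two candidate sets coincide.

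First I would compute the right-hand side using Lemma~\ref{hypcomp}: that lemma already gives $\omega\sigma_{C^*}(M) = M \triangle (E(P_r) - C^*)$, so no further work is needed there. Next I would unfold the left-hand side using the definition of local complementation, which says $\lambda_{C^*}(M) = M \triangle [\mathrm{cl}_{P_r}(E(M) \cap C^*) - C^*]$. So the entire lemma reduces to the set-theoretic identity
$$\mathrm{cl}_{P_r}(E(M) \cap C^*) - C^* = E(P_r) - C^*.$$

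The key step is invoking the rank hypothesis. Since $E(M) \cap C^*$ has rank $r$ in $P_r$, its projective closure is all of $P_r$, i.e.\ $\mathrm{cl}_{P_r}(E(M) \cap C^*) = E(P_r)$. Subtracting $C^*$ from both sides gives the required identity, and the lemma follows immediately. The whole argument is essentially a one-line unfolding once Lemma~\ref{hypcomp} is in hand, so there is no real obstacle; the only thing worth flagging is that the rank-$r$ hypothesis is used in exactly one place, namely to upgrade $\mathrm{cl}_{P_r}(E(M)\cap C^*) \subseteq E(P_r)$ to equality.
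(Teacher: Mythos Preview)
Your proof is correct and is essentially identical to the paper's own argument: both use the rank hypothesis to get $\mathrm{cl}_{P_r}(E(M)\cap C^*)=E(P_r)$, then invoke Lemma~\ref{hypcomp} to identify $M\triangle(E(P_r)-C^*)$ with $\omega\sigma_{C^*}(M)$. The only difference is cosmetic ordering (you start from the right-hand side, the paper from the left).
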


\begin{proof}
As $r(E(M) \cap C^*) = r$, we see that ${\rm{cl}}_{P_r}(E(M) \cap C^*) = E(P_r)$. Thus $\lambda_{C^*}(M) = M \triangle (E(P_r)-C^*)$. By Lemma \ref{hypcomp}, the last matroid is $\omega\sigma_{C^*}(M)$.

\end{proof}

The next result shows that the operation of complementation is redundant and can be expressed in terms of the operations of switching and local complementation. Recall that complementation and switching commute.

\begin{proposition}
\label{comp1}
Let $N$ be a matroid that can be obtained from $P_r$ using the operations of complementation, switching, and local complementation. Then the operations of switching and local complementation are enough to obtain $N$ from $P_r$.
\end{proposition}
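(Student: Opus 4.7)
The plan is to show that complementation, applied to any $M \in \mathcal{M}_r$, can be expressed as a short composition of switchings and local complementations; substituting for each occurrence of $\omega$ in a given sequence realizing $N$ from $P_r$ then produces a sequence using only $\sigma$ and $\lambda$. The main leverage comes from Lemma~\ref{lc1}: whenever $E(M) \cap C^*$ has rank $r$, we have $\lambda_{C^*}(M) = \omega \sigma_{C^*}(M)$, and since $\omega$ and $\sigma_{C^*}$ commute (Lemma~\ref{symdiff}) and are both involutions, this rearranges to $\omega(M) = \sigma_{C^*} \lambda_{C^*}(M)$.

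First I would treat the case where $M$ spans $P_r$. Picking a basis $B \subseteq E(M)$ of $P_r$, I would produce a cocircuit $C^*$ of $P_r$ containing $B$; equivalently, a hyperplane $H$ of $P_r$ disjoint from $B$. Such a hyperplane exists: in the coordinate system determined by $B$, take $H$ to be the ``parity'' hyperplane consisting of the non-zero even-weight vectors, since each element of $B$ has weight one and so avoids $H$. Setting $C^* = E(P_r)-H$, one has $r(E(M) \cap C^*) \geq r(B) = r$, and the rearrangement above yields $\omega(M) = \sigma_{C^*} \lambda_{C^*}(M)$, a two-step replacement.

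The main obstacle is the rank-deficient case: if $r(M) < r$, no cocircuit meets $E(M)$ in a spanning set, so Lemma~\ref{lc1} cannot be applied directly to $M$. I would resolve this by a conjugation trick. Let $H$ be a hyperplane of $P_r$ containing $E(M)$ and set $D^* = E(P_r) - H$, so that $E(M) \cap D^* = \emptyset$. Then $\sigma_{D^*}(M)$ has ground set $E(M) \cup D^*$, which spans $P_r$. The spanning case applied to $\sigma_{D^*}(M)$ gives a $\sigma\lambda$-expression for $\omega$ on $\sigma_{D^*}(M)$, and then the commutativity of $\omega$ with $\sigma_{D^*}$ combined with $\sigma_{D^*}^2 = \iota$ gives
\[ \omega(M) = \sigma_{D^*}\, \omega\, \sigma_{D^*}(M), \]
realizing $\omega(M)$ as a four-step block of switchings and local complementations applied to $M$.

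Finally, I would traverse a given sequence $f_k \circ \cdots \circ f_1$ realizing $N$ from $P_r$, replacing each $\omega$ at position $i$ by the appropriate two- or four-step block determined by the current matroid $f_{i-1}\cdots f_1(P_r)$. Because each replacement preserves the output at that step, the rewritten sequence still produces $N$ while using only switchings and local complementations, as required.
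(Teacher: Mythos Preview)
Your argument is correct and follows essentially the same route as the paper: express $\omega$ on an arbitrary $M$ via Lemma~\ref{lc1}, treating the spanning and non-spanning cases separately, and then substitute throughout a given sequence. The one simplification the paper makes is in the rank-deficient case: since $D^* \cap E(M) = \emptyset$ implies $D^* \subseteq E(\sigma_{D^*}(M))$ and $D^*$ itself spans $P_r$, Lemma~\ref{lc1} applies with the \emph{same} cocircuit $D^*$, giving the two-step expression $\omega(M) = \lambda_{D^*}\sigma_{D^*}(M)$ rather than your four-step conjugation.
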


\begin{proof}
It is enough to show that complementation of an arbitrary restriction $M$ of $P_r$ can be written in terms of switchings and local complementation. Suppose $r(M) < r$. Then there is a projective cocircuit $C^*$ of $P_r$ such that $C^* \cap E(M) = \emptyset$. 
{\color{black} Hence $C^* \cap E(\sigma_{C^*}(M)) = C^*$. 
As every cocircuit of $P_r$ is spanning,  by Lemma \ref{lc1}, we have $\lambda_{C^*}(\sigma_{C^*}(M)) = \omega\sigma_{C^*}(\sigma_{C^*}(M))$. Thus
 $\lambda_{C^*}(\sigma_{C^*}(M)) =  \omega(M)$.} 
We may now assume that $r(M) = r$. Then $M$ contains a basis $B$ of $P_r$. Since every projective cocircuit contains a basis of $P_r$, by the symmetry of $P_r$, the basis $B$ is contained in a projective cocircuit, say $C^*$. {\color{black} Then, using Lemma \ref{lc1} again, we get} $\omega(M) = \sigma_{C^*}\lambda_{C^*}(M)$.
\end{proof}

The following is the main result of this section.

\begin{theorem}
\label{main}
For fixed $r$ exceeding {\color{black} seven}, not all binary matroids of rank at most $r$ can be obtained from $P_r$ using the operations of complementation, local complementation, and switching.
\end{theorem}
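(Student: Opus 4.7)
The plan is to prove Theorem~\ref{main} via a counting argument, showing that the orbit of $P_r$ under the three operations is a proper subset of $\mathcal{M}_r$ for $r > 7$. By Proposition~\ref{comp1}, complementation is redundant, so it suffices to bound the orbit under switchings and local complementations. By Lemma~\ref{switch and complemetation 0} (and the fact, used in Section~4, that the cocircuit space of $P_r$ has dimension $r$), the switchings form an abelian subgroup of order $2^r$ in the permutation group of $\mathcal{M}_r$. I would therefore normalize every operation sequence from $P_r$ into the alternating form $\sigma_0 \, \lambda_{D_1^*} \, \sigma_1 \, \lambda_{D_2^*} \, \sigma_2 \cdots \lambda_{D_k^*} \, \sigma_k$, where each $\sigma_i$ is one of $2^r$ switchings (identity included) and each $\lambda_{D_j^*}$ is one of the $2^r - 1$ local complementations.

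Counting such alternating sequences with $k$ local complementations yields at most $(2^r)^{k+1}(2^r-1)^k \leq 2^{r(2k+1)}$ parametrizations, and hence at most $2^{r(2K+1)+1}$ reachable matroids if every orbit element is reachable by a sequence with $k \leq K$ local complementations. If $K = K(r)$ is polynomial in $r$, then the orbit size is at most $2^{O(r^2)}$, which is strictly less than $|\mathcal{M}_r| = 2^{2^r - 1}$ for $r > 7$.

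The main obstacle is justifying such a bound $K(r)$ on the ``local-complementation diameter'' of the orbit. My intended approach is a reduction argument: show that after applying local complementations with cocircuits $D_1^*, \ldots, D_k^*$, the composite effect can be rewritten using at most $K(r)$ of them by collapsing those whose added sets ${\rm{cl}}_{P_r}(E(M) \cap D_i^*) - D_i^*$ interact linearly, in analogy with the way switchings collapse via Lemma~\ref{symm2}. I expect this to be the most delicate step of the proof because local complementations do not commute and their effect depends on the current matroid, so direct linear algebra in $\mathbb{F}_2^{E(P_r)}$ must be combined with case analysis on the rank of $E(M) \cap D_i^*$.

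If a clean diameter bound proves elusive, a fallback is to identify a non-trivial invariant of the three operations. A natural arena is the polynomial representation of $\chi_M$ as a function $\mathbb{F}_2^r \to \mathbb{F}_2$ with $\chi_M(0) = 0$: switching adds a degree-$1$ polynomial, and local complementation with $C^*$ adds a polynomial whose structure is controlled by the rank of $E(M) \cap C^*$. An $\mathbb{F}_2$-linear functional of the coefficients preserved by both operations would produce an explicit matroid not in the orbit of $P_r$, giving a direct and more informative proof.
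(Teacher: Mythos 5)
Your main route has a genuine gap at exactly the point you yourself flag: the bound $K(r)$ on the number of local complementations needed. Nothing in your sketch supplies such a bound, and the only a priori bound available --- the diameter of the Schreier graph of the orbit --- is the orbit size itself, which is precisely the quantity you are trying to estimate; without a new structural insight the counting is circular. The analogy with Lemma~\ref{symm2} does not carry over: switchings collapse because $\sigma_{C^*}\sigma_{D^*}=\sigma_{C^*\bigtriangleup D^*}$ is independent of the matroid being acted on, whereas the set ${\rm cl}_{P_r}(E(M)\cap D^*)-D^*$ added by a local complementation changes with $M$, so consecutive local complementations do not combine by any linear rule in $\mathbb{F}_2^{E(P_r)}$. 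Your fallback (a globally preserved $\mathbb{F}_2$-linear functional) also cannot work as stated, since Proposition~\ref{bound} shows the orbit of $P_4$ is all of $\mathcal{M}_4$, so no operation-preserved invariant separates matroids uniformly in $r$; any invariant must be tied to special starting matroids that exist only for large $r$, and you do not construct one.

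The paper's proof inverts your strategy: instead of bounding the orbit of $P_r$ from above, it exhibits one matroid $M$ whose orbit is provably tiny. A first-moment probabilistic argument (valid for $r>7$) produces an $M$ such that for every projective cocircuit $C^*$ both $C^*\cap E(M)$ and $C^*-E(M)$ are spanning (``Property~2''), and this property is \emph{absorbing}: by Lemma~\ref{lc1}, on any Property-2 matroid $\lambda_{C^*}$ degenerates to $\omega\sigma_{C^*}$, so every word in the three operations applied to $M$ collapses, via Lemma~\ref{switch and complemetation 0}, to one of $\iota$, $\omega$, $\sigma_{C^*}$, $\omega\sigma_{C^*}$, all of which preserve Property~2. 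Since $P_r$ fails Property~2 and the generators are involutions, $M$ is not reachable from $P_r$. If you want to rescue your plan, the lesson is that the needed ``collapse'' of local complementations is achieved not by a diameter bound but by choosing the starting matroid so that every local complementation is forced to equal a switching followed by a complementation.
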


\begin{proof}
%In view of the preceding lemma, it is enough to consider the operations of switching and local complementation. 
Consider a restriction $M$ of $P_r$. We say that $M$ has \textbf{Property 1} if, for every two distinct  cocircuits $C^*$ and $D^*$ of $P_r$, both $(C^* - D^*) \cap E(M)$ and $(C^* - D^*)-E(M)$
have rank $r-1$. Instead, if, for every cocircuit $C^*$ of $P_r$, both $C^* \cap E(M)$ and $C^* - E(M)$ have rank $r$, then we say that $M$ has \textbf{Property 2}.

\begin{sublemma}
\label{sublemma1}
If $M$ has Property 1, then it has Property 2.
\end{sublemma}

Let $C^*$ be an arbitrary cocircuit of $P_r$. Let $e \in C^* \cap E(M)$. Observe that there is a different projective cocircuit $D^*$ such that $e \in C^* \cap D^*$. Since $M$ has Property 1, $r((C^*-D^*) \cap E(M)) = r-1$. 
As $e$ is not in the projective closure of $C^*-D^*$, we deduce that $r((C^*-D^*) \cap E(M)) < r(((C^*-D^*)\cap E(M)) \cup e)$, so $r(C^* \cap E(M)) = r$. Similarly, $r(C^* - E(M))=r$ unless $E(M) \supseteq  C^* \cap D^*$. In the exceptional case, {\color{black} since, by Lemma~\ref{symm2}, $C^* \triangle D^*$ is a cocircuit of $P_r$, and  $C^* \cap D^* = C^* - (C^* \bigtriangleup D^*)$, 
we get a violation of Property 1.} Hence \ref{sublemma1} holds.

We will use probabilistic methods for the rest of the proof. Independently colour each element of $P_r$ green or red with equal probability. Let $G$ and $R$ denote the sets of green and red elements, respectively. This gives $2^{2^r-1}$ members of a sample space,  $\mathbb{S}$, of all possible 2-colourings of $P_r$.

\begin{sublemma}
\label{sublemma2}
There is a $2$-colouring, say $X$, of $P_r$ such that, for all distinct cocircuits $C^*$ and $D^*$ of $P_r$, both $(C^*-D^*) \cap G$ and $(C^*-D^*) \cap R$ have rank $r-1$.
\end{sublemma}

{\color{black} Call a 2-colouring of $P_r$ bad if, for some pair of distinct cocircuits, $C^*$ and $D^*$, of $P_r$, either $(C^*-D^*) \cap G$ or $(C^*-D^*) \cap R$ has rank less than $r-1$. Note that $P_r|(C^*-D^*)$ is isomorphic to $A_{r-1}$.  Moreover, $(C^*-D^*) \cap G$ or $(C^*-D^*) \cap R$ has rank less than $r-1$ if and only if it is contained in an $A_{r-2}$. Thus a 2-colouring is bad if and only if there is a monochromatic $A_{r-2}$. 
Therefore, the number of bad colourings   is at most twice the product of the number of copies of  $A_{r-2}$ in   $P_r$ and the number of subsets of $E(P_r) - E(A_{r-2})$. The number of copies of  $A_{r-2}$ in  $P_r$ is the product of the number of copies of $P_{r-2}$ in $P_r$ and the number of hyperplanes of $P_{r-2}$. Thus the number of bad colourings is at most 
$$\frac{2(2^r-1)(2^{r-1} - 1)(2^{r-2} - 1)(2^{2^r - 1 - 2^{r-3}})}{3}.$$ 
Thus the probability that $P_r$ has a bad colouring is at most 
$$\frac{2(2^r-1)(2^{r-1} - 1)(2^{r-2} - 1)}{3(2^{2^{r-3}})}.$$}

\noindent For $r > {\color{black} 7}$, this probability is less than $1$, so \ref{sublemma2} holds.

Now, for the 2-colouring $X$, let $P_r|G = M$. Then, by \ref{sublemma1} and \ref{sublemma2}, $M$ {\color{black} satisfies Properties 1 and 2.} Note that $\omega(M)$ satisfies Properties 1 and 2; and $\sigma_{C^*}(M)$ satisfies Property 2 for all projective cocircuits $C^*$. Moreover, since $M$ satisfies Property 2, by Lemma \ref{lc1}, $\lambda_{C^*}(M) = \omega \sigma_{C^*}(M)$ and so $\lambda_{C^*}(M)$ also satisfies Property 2. 
{\color{black} Now suppose that $\alpha$ is a sequence of $n$ operations, each a complementation, a switching, or a local complementation. 
We show next that 

\begin{sublemma}
\label{sublemmaalpha}
$\alpha(M)$ is  $M$, $\omega(M)$, $\sigma_{C^*}(M)$, or $\sigma_{C^*} \omega(M)$, for some projective cocircuit $C^*$, and so $\alpha(M)$ satisfies Property 2.
\end{sublemma}

We obtain this from Lemma~\ref{switch and complemetation 0} by showing, by induction on $n$, that  $\alpha$ can be written as a  sequence of switchings and complementations. This was noted above for $n = 1$. Assume  it holds for $n< k$ and let $n = k \ge 2$. Then $\alpha = \gamma\beta$ 
where $\beta$ is the product of the first $k-1$ operations and $\gamma$ is the $k^{\rm th}$ operation. By the induction assumption, $\beta(M)$ has the specified form and satisfies Property 2. Thus, so does $\alpha(M)$ unless $\gamma = \lambda_{D^*}$. But, in the exceptional case, 
by Lemma~\ref{lc1}, $\alpha(M) = \lambda_{D^*}\beta(M) = \omega\sigma_{D^*}\beta(M)$ and the assertion again follows by Lemma~\ref{switch and complemetation 0}. We conclude that \ref{sublemmaalpha} holds.}

It follows that any matroid that can be obtained from $M$ by a sequence of the operations of complementation, switching, and local complementation satisfies Property 2. Since each of these operations has order two and $P_r$ cannot be obtained from $M$, we see that $M$ cannot be obtained from $P_r$.
\end{proof}

{\color{black} It is not difficult to check that, for $r$ in $\{1,2,3\}$, 
  all binary matroids of rank at most $r$ can be obtained from $P_r$ using the given operations. 
The next proposition establishes that this is also true when $r$ is $4$. By Theorem~\ref{main}, the corresponding result fails when $r$ is at least $8$. 
We do not know what happens when $r \in\{5,6,7\}$.}

\begin{proposition}
\label{bound}
All binary matroids of rank at most four can be obtained from $P_4$ using the operations of complementation, switching, and local complementation.
\end{proposition}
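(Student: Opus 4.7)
The plan is to combine Proposition~\ref{comp1}, which shows that complementation can be expressed in terms of switching and local complementation, with a finite case analysis made feasible by the small size of $P_4$: only $15$ elements, and hence only finitely many simple binary matroids of rank at most $4$ to consider. The approach proceeds in three stages.

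First, by Proposition~\ref{switch and complement} and Corollary~\ref{switch and complement gen}, I have as a starting point that $P_4$, $U_{0,0}$, every projective hyperplane (realized as a $P_3$), and every projective cocircuit (realized as an $A_4$) are reachable from $P_4$ via switching and complementation alone; this provides $32$ specific reachable restrictions to build from.

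Second, I would analyze the effect of a single local complementation: if $k = r(E(M) \cap C^*)$, then $\lambda_{C^*}$ flips the elements of ${\rm{cl}}_{P_4}(E(M) \cap C^*) - C^*$, which is a hyperplane of the rank-$k$ projective flat ${\rm{cl}}_{P_4}(E(M) \cap C^*)$ and hence has size $2^{k-1} - 1$. For $P_4$ this gives flip sizes of $0, 0, 1, 3, 7$ when $k = 0, 1, 2, 3, 4$, respectively. Together with switching (flip size $8$) and complementation (flip size $15$), this is a rich set of moves; in particular, the $1$-element flip is achievable precisely when $E(M) \cap C^* = \{a, b\}$ for two elements with $a + b \notin C^*$, and it lets us modify restrictions one element at a time.

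Third, I would use these moves to prove reachability of every simple binary matroid of rank at most $4$, either by induction on $|E(M)|$ or by direct enumeration of the (relatively few) isomorphism classes of rank-$\leq 4$ simple binary matroids, exhibiting an explicit sequence of operations for each. For targets close to the seed matroids, a short sequence suffices; for more intricate targets, one reaches a nearby matroid first and then applies $1$- or $3$-element flips together with switchings to adjust.

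The main obstacle is the combinatorial bookkeeping. Although the $1$-element flip is powerful, flipping a specific element $x$ requires finding a cocircuit $C^*$ with $x \notin C^*$ such that $E(M) \cap C^*$ equals exactly the two points $\{a, b\}$ of the line through $x$ that lie in $C^*$. For $M$ with large $|E(M)|$ this cocircuit need not exist directly, and one must perform auxiliary switchings or local complementations first to bring $E(M) \cap C^*$ into the required shape. Because $r = 4$ is small, this reshaping can be carried out by a finite check, but the argument has no clean extension to $r \geq 5$, which is consistent with the paper leaving $r \in \{5, 6, 7\}$ open.
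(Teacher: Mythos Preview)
Your proposal correctly inventories the available moves---in particular the observation that a local complementation with $r(E(M)\cap C^*)=k$ flips a set of size $2^{k-1}-1$, giving flip sizes $0,1,3,7$ alongside $8$ (switching) and $15$ (complementation)---but it stops at the point where the actual work begins. Everything after your second paragraph is a promise to carry out ``direct enumeration'' or a ``finite check''; no sequence of operations is exhibited for any target matroid, and no structural reduction is given that would make the enumeration manageable. As written this is a strategy, not a proof.

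The paper's argument is not a brute-force enumeration but a minimum-counterexample reduction that you are missing. Using complementation one may assume $|E(M)|\le 7$. The rank-$\le 3$ targets are handled by pairing each with its complement inside a projective hyperplane, reducing to five pairs and exhibiting one member of each via short explicit sequences starting from $\sigma_{C^*}(P_4)=P_3$. For a minimal rank-$4$ counterexample, the key structural step---absent from your outline---is that every hyperplane of $M$ must have exactly three elements (otherwise a suitable local complementation or hyperplane-complementation produces a strictly smaller matroid). Combined with $|E(M)|\le 7$, this forces $M\in\{U_{4,4},\,U_{4,5},\,F_7^*\}$, and each of these three is then dispatched by an explicit chain of two or three operations. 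Your proposal would need either to discover this reduction or to replace it with a genuine enumeration of all $2^{15}$ restrictions, neither of which it does.
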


\begin{proof}
Because we can use complementation inside of projective hyperplanes, to see that every matroid of rank at most three can be obtained, it suffices to show that a member of each of the following pairs can be obtained from $P_4$: $\{P_3, U_{0,0}\}$, $\{M(K_4), U_{1,1}\}$, $\{P(U_{2,3}, U_{2,3}), U_{2,2}\}$ $\{U_{3,4}, U_{2,3}\}$, $\{U_{2,3} \oplus U_{1,1}, U_{3,3}\}$. Now $\sigma_{C^*}(P_4)=P_3$. A local complementation in $P_3$ gives $U_{3,4}$. Doing a local complementation using a 2-cocircuit of $U_{3,4}$ gives {\color{black} $M(K_4 \backslash e)$.} In this matroid, there is 3-cocircuit and a 4-element set that is a disjoint union of two 2-cocircuits. Local complementation with respect to the first of these sets gives $U_{3,3}$; local complementation with respect to the second gives $M(K_4)$. We conclude that every matroid of rank at most three is obtainable from $P_4$. 

Assume the result fails and let $M$ be a minimum-sized matroid that cannot be obtained from $P_4$. Observe that $r(M)=4$, $|E(M)| \leq 7$, and that every hyperplane of $M$ has exactly three elements. First, we show that $M$ is not isomorphic to $U_{4,4}$. Observe that there is a projective cocircuit $C^*$ such that $|C^* \cap E(U_{4,4})| =3$. Performing a local complementation with respect to $C^*$ transforms $U_{4,4}$ to $U_{1,1} \oplus M(K_4)$. Now, complementation in the hyperplane containing the ground set of $M(K_4)$ gives us $U_{2,2}$. Since $U_{2,2}$ can be obtained from $P_4$, so too can $U_{4,4}$.

Observe that if $M$ has a coloop, then $M \cong U_{4,4}$. Assume all cocircuits of $M$ have at least three elements. Then $M^*$ is simple of corank four and rank at most three. Thus $M \cong F_7^*$. Note that $\omega(F_7^*) = F_7 \oplus U_{1,1}$. Now, complementation in the hyperplane $F_7$ gives us $U_{1,1}$ and therefore, $F_7^*$ can be obtained from $P_4$ using the given operations.

 We may now assume that $M$ has a 2-element cocircuit. Since $M$ has no coloops, $M \cong U_{4,5}$. Using local complementation with respect to a 2-cocircuit, we can transform $U_{4,5}$ into $P(U_{3,4}, U_{2,3})$. Now, doing a complementation in the hyperplane containing the ground set of $U_{3,4}$ gives us $U_{2,3} \oplus U_{2,2}$. Finally, performing a local complementation using a 2-cocircuit of $U_{2,3} \oplus U_{2,2}$, we obtain $U_{4,4}$ implying that $U_{4,5}$ is obtainable via the given operations, a contradiction.   \end{proof}

\section{More Operations}

In this section, we introduce a new operation. Let $M$ be a restriction of $P_r$. Colour an element $e$ of $P_r$ green if $e \in E(M)$ and red otherwise. Denote the sets of green and red elements of $P_r$ by $G$ and $R$,  respectively. Note that $E(M) = G$. Let $f$ be an element of $P_r$. For every line $L = \{f,f',f''\}$ passing through $f$, we consider $\{f',f''\}$, and swap their colours if these {\color{black} colours are different.} We call this operation a \textbf{pointed swap with respect to} $f$. It is an \textbf{on-element swap} if $f\in G$ and an \textbf{off-element swap} otherwise. We use $\psi^+_{f}(M)$ and $\psi^-_{f}(M)$ to denote the matroids obtained from $M$ by doing on-element and off-element swaps with respect to $f$.

This new operation can also be viewed in terms of matrices. Let $A$ be an $r \times t$ matrix over $GF(2)$ representing $M$ such that the columns of $A$ correspond to the green elements of $P_r$. Let $v$ be a binary vector of length $r$ corresponding to an element $f$ of $P_r$. By $A \hat{+}v$, we denote the matrix obtained by adding $v$ to every column of $A$ that is distinct from $v$. Observe that $M[A\hat{+}v]$ is the matroid obtained from $M[A]$ by a pointed swap with respect to $f$. This operation is an on-element swap if $f\in G$ and an off-element swap otherwise.

\begin{lemma}
\label{on-off1}
Let $M$ be a $t$-element matroid that is a restriction of $P_r$. Then every $t$-element restriction of $P_r$ can be obtained from $M$ using pointed swaps.
\end{lemma}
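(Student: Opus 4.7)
The plan is to reduce the lemma to a single-element replacement. Specifically, I will show that for any $a \in E(M)$ and any $v \in E(P_r) \setminus E(M)$, the $t$-element restriction $M'$ of $P_r$ with $E(M') = (E(M) \setminus \{a\}) \cup \{v\}$ can be obtained from $M$ via three pointed swaps. Granting this, the lemma follows by iteration: let $N$ be any target $t$-element restriction, write $E(M) \setminus E(N) = \{a_1, \ldots, a_k\}$ and $E(N) \setminus E(M) = \{v_1, \ldots, v_k\}$ (these sets have equal size since $|E(M)| = |E(N)|$), and at the $i$-th stage replace $a_i$ by $v_i$. After $k$ stages the ground set equals $E(N)$. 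The required preconditions ($a_i$ green, $v_i$ red) persist throughout, because the only green elements ever deleted are the $a_j$'s and the only red elements ever created are the $v_j$'s.

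For the one-element replacement, set $G = E(M)$ and $b = a + v \in GF(2)^r$. Since $a \neq v$, $b$ is nonzero and hence a legitimate element of $P_r$. Perform, in order, an off-element pointed swap with respect to $v$, an on-element pointed swap with respect to $b$, and an off-element pointed swap with respect to $a$. The first swap is off-element because $v \notin G$. Using the matrix description $A \hat{+} v$, which replaces every column $c \neq v$ by $c + v$ and leaves column $v$ (if present) fixed, one computes that after the first swap the ground set is $G + v = \{g + v : g \in G\}$; this contains $b = a + v$ since $a \in G$, so the second swap is legitimately on-element and produces the ground set $\{b\} \cup \{g + a : g \in G \setminus \{a\}\}$. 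A brief case analysis shows $a$ lies in this set only if $v = 0$ or $0 \in G$, neither of which holds; hence the third swap is legitimately off-element, and it adds $a$ to every element, producing
\[
\{b + a\} \cup (G \setminus \{a\}) = \{v\} \cup (G \setminus \{a\}) = E(M').
\]

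The main obstacle is really only careful bookkeeping: verifying at each of the three steps that the pointed element has the colour required for its swap type, and confirming that no unintended coincidence (such as $b = v$, $b = 0$, or $a$ reappearing as a green element after the second swap) occurs. All these checks reduce to the basic facts $a \in G$, $v \notin G$, $a \neq v$, and $a, v \neq 0$, so no serious difficulty is anticipated.
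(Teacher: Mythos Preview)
Your proof is correct and follows exactly the same strategy as the paper: reduce to a single green--red exchange, then achieve that exchange by the three-swap sequence $\psi^-_v$, $\psi^+_{a+v}$, $\psi^-_a$ (the paper writes these as $\psi^-_w$, $\psi^+_{v_k+w}$, $\psi^-_{v_k}$). Your added verification that the preconditions persist across iterations is a nice touch that the paper leaves implicit.
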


\begin{proof}
Let $A= [v_1, v_2, \ldots, v_t]$ be a matrix representing $M$, so the columns of $A$ correspond to the green elements of $P_r$. Note that it is enough to show that, for any vector $w$ corresponding to a red element $e$ of $P_r$ and an arbitrary green element $f$ of $P_r$ corresponding to the column $v_k$, we can use pointed swaps to obtain a matroid $M'$ from $M$ such that $M'= P_r|(E(M) \bigtriangleup \{e,f\})$. Doing an off-element swap on $M$ with respect to $w$, we get

\begin{align*}
    [v_1,\ldots,v_k, \ldots, v_r] \xrightarrow{\psi^-_w}  [v_1 +w, \ldots, v_k+w, \ldots, v_r+w].
\end{align*}

\noindent Now, doing an on-element swap with respect to $v_k+w$, we get

\begin{align*}
[v_1 +w, \ldots, v_r+w] \xrightarrow{\psi^+_{v_k+w}}  
    [v_1+v_k, \ldots, v_k+w, \ldots, v_r+v_k]. 
\end{align*}

\noindent Finally, doing an off-element swap with respect to $v_k$, we get

\begin{align*}
    [v_1+v_k, \ldots, v_k+w, \ldots, v_r+v_k] \xrightarrow{\psi^-_{v_k}} [v_1,\ldots, w, \ldots, v_r].
\end{align*}

\end{proof}

Next we show that the operation of complementation can be obtained by three complementations inside of projective hyperplanes.

\begin{lemma}
\label{hyperplane_complement}

For $r>1$, let $H_1, H_2, H_3$ be three projective hyperplanes that contain a fixed rank-$(r-2)$ flat of $P_r$. Then $\omega(M) = M \triangle H_1 \triangle H_2 \triangle H_3$.
\end{lemma}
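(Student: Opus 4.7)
The plan is to reduce the claim to verifying that $H_1 \bigtriangleup H_2 \bigtriangleup H_3 = E(P_r)$ as a set equality, since then
$$M \triangle H_1 \triangle H_2 \triangle H_3 = M \triangle (H_1 \bigtriangleup H_2 \bigtriangleup H_3) = M \triangle E(P_r) = \omega(M)$$
by iterated application of Lemma~\ref{symdiff} and the definition of $\omega$. The first observation I would use is that the hyperplanes of $P_r$ containing the fixed rank-$(r-2)$ flat $F$ correspond bijectively with the hyperplanes of the quotient $P_r / F$. Since $r(F) = r-2$, the quotient $P_r/F$ has rank two and is isomorphic to $PG(1,2) \cong U_{2,3}$, whose hyperplanes are its three points. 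Hence there are exactly three hyperplanes of $P_r$ through $F$, so (as $H_1, H_2, H_3$ are distinct) they exhaust these three.

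Next I would prove that each element $e \in E(P_r)$ lies in an odd number of the $H_i$. If $e \in F$, then $e$ belongs to all three of $H_1, H_2, H_3$ because each contains $F$. If $e \notin F$, then on the one hand ${\rm{cl}}_{P_r}(F \cup \{e\})$ is a hyperplane through $F$, and therefore equals some $H_i$, so $e$ lies in at least one of the $H_i$. On the other hand, for distinct $i$ and $j$, the modularity of $P_r$ gives $r(H_i \cap H_j) = r-2$; since $H_i \cap H_j$ is a flat of rank $r-2$ containing $F$, it must equal $F$, and so $e \notin H_i \cap H_j$. Thus $e$ lies in exactly one of $H_1, H_2, H_3$.

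Combining the two cases, every element of $E(P_r)$ lies in one or three of the $H_i$, in both cases an odd number, so $H_1 \bigtriangleup H_2 \bigtriangleup H_3 = E(P_r)$ and the lemma follows as noted at the outset. I do not anticipate any substantial obstacle here; the only step requiring care is ruling out the case $e \in H_i \cap H_j$ for $e \notin F$ and $i \ne j$, and this is immediate from the modularity of $P_r$ together with the fact that $F$ is its own closure.
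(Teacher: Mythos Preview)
Your argument is correct and follows exactly the paper's approach: reduce via Lemma~\ref{symdiff} to the identity $H_1 \bigtriangleup H_2 \bigtriangleup H_3 = E(P_r)$ and then verify that identity. The paper simply asserts this identity without justification, whereas you supply the details via the quotient/modularity argument; your version is more thorough but not different in substance.
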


\begin{proof}
By Lemma \ref{symdiff}, $M \triangle H_1 \triangle H_2 \triangle H_3 = M \triangle (H_1 \bigtriangleup H_2 \bigtriangleup H_3)$. But $H_1 \bigtriangleup H_2 \bigtriangleup H_3 = E(P_r)$, so $M \triangle H_1 \triangle H_2 \triangle H_3 = \omega(M)$.
\end{proof}

\begin{theorem}
\label{main1}
 For $r > 1$, all binary matroids of rank at most $r$ can be obtained from $P_r$ via a sequence of the operations of pointed swaps and complementation inside projective hyperplanes.

\end{theorem}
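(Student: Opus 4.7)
The plan is to combine Lemma~\ref{on-off1} with the observation that complementing inside a projective hyperplane shifts the cardinality in a controllable way, and to use Lemma~\ref{hyperplane_complement} to handle the parity issue.  By Lemma~\ref{on-off1}, pointed swaps act transitively on the set of $t$-element restrictions of $P_r$ for each $t$, so it suffices to produce from $P_r$ at least one restriction of each cardinality $t \in \{0, 1, \ldots, 2^r - 1\}$; the remaining restrictions of that cardinality are then reached by pointed swaps alone.  Complementation of $M$ inside a hyperplane $H$ produces $M \triangle H$, which has cardinality $|M| + (2^{r-1} - 1) - 2|M \cap H|$; since $2^{r-1} - 1$ is odd, each such complementation flips the parity of the cardinality.

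First I would handle the odd cardinalities.  Complementing $P_r$ inside a hyperplane $H_1$ gives the affine geometry $A_r$, of cardinality $2^{r-1}$.  Given an arbitrary odd $t' \in \{1, 3, \ldots, 2^r - 1\}$, set $s = (2^r - 1 - t')/2$; then $s \in \{0, 1, \ldots, 2^{r-1} - 1\}$.  Fix any projective hyperplane $H_2$ and choose a $2^{r-1}$-element subset $S$ of $E(P_r)$ with $|S \cap H_2| = s$ (such an $S$ exists since $s \leq |H_2| = 2^{r-1} - 1$ and $2^{r-1} - s \leq |E(P_r) - H_2| = 2^{r-1}$).  By Lemma~\ref{on-off1}, pointed swaps carry $A_r$ to the restriction $N$ with $E(N) = S$, and then complementation of $N$ inside $H_2$ yields a matroid of cardinality $2^{r-1} + (2^{r-1} - 1) - 2s = t'$, as required.

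Next I would handle the even cardinalities.  For $t' \in \{0, 2, \ldots, 2^r - 2\}$, the number $2^r - 1 - t'$ is odd, so by the previous step some matroid of that cardinality is obtainable; applying $\omega$, which by Lemma~\ref{hyperplane_complement} is a composition of three hyperplane complementations, produces a matroid of cardinality $2^r - 1 - (2^r - 1 - t') = t'$.  Finally, Lemma~\ref{on-off1} converts any one representative of each cardinality into every other restriction of that cardinality, which proves the theorem.

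The main obstacle is really just the opening move:  starting at $P_r$ itself, hyperplane complementation is forced to produce $A_r$ because the intersection size $|P_r \cap H|$ has no freedom, so a stopover at $A_r$ is essential before pointed swaps provide the flexibility to engineer an arbitrary intersection size $s$ with a second hyperplane.  Once that stopover is in hand, the arithmetic of cardinality changes is direct, and the parity mismatch for even $t'$ is immediately handled by the expression of $\omega$ as three hyperplane complementations.
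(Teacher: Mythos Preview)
Your proof is correct and takes a genuinely different route from the paper's.  The paper argues by minimal counterexample: after showing that $U_{1,1}$ is reachable, it takes an arbitrary $M$ with at least two elements and exhibits an explicit sequence of pointed swaps, full complementations (via Lemma~\ref{hyperplane_complement}), a hyperplane complementation, and a switching that together produce the matroid on $E(M)$ with two specified elements removed; invertibility of the operations then gives the contradiction.  Your argument instead collapses the whole problem, via Lemma~\ref{on-off1}, to the task of reaching one restriction of each cardinality, and then controls cardinality arithmetically through the identity $|M \triangle H| = |M| + (2^{r-1}-1) - 2|M\cap H|$: one hyperplane complementation takes $P_r$ to $A_r$, pointed swaps let you prescribe $|N\cap H_2|$ freely, a second hyperplane complementation reaches any odd cardinality, and Lemma~\ref{hyperplane_complement} supplies the parity flip to the even cardinalities.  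Your approach is shorter and makes the sufficiency of the two operations transparent at the level of cardinalities; the paper's approach is more hands-on, constructing an explicit two-element-deletion gadget and invoking Lemma~\ref{on-off1} only once, in the base case.
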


\begin{proof}
Assume the theorem fails and let $M$ be a matroid with minimum-sized ground set that cannot be obtained from $P_r$ via the specified operations. First, we show that $U_{1,1}$ can be obtained from $P_r$, and so $M$ has at least two elements. Doing a complementation inside a hyperplane of $P_r$, we obtain a matroid $N$ that is isomorphic to $A_r$ and thus has $2^{r-1}$ elements. As ${\color{black} P_{r-1}} \oplus U_{1,1}$ also has $2^{r-1}$ elements, Lemma \ref{on-off1} implies we can get a matroid isomorphic to $P_{r-1} \oplus U_{1,1}$ from $N$ via pointed swaps. Now, complementation inside the hyperplane $P_{r-1}$ gives us $U_{1,1}$. 

Let $x, y$ be two distinct elements in the ground set of $M$ and let $C^*$ be a projective cocircuit containing $x$ and $y$. Let $H = E(P_r)-C^*$. Recall that $x$ and $y$ are coloured green in $P_r$. We may assume that $H$ has at least one red element, say $z$, otherwise, by complementation inside of $H$, we reduce the number of green elements. Note that {\color{black} the colours of any pair of differently coloured elements of $P_r$ can be interchanged by a single pointed swap.} Thus we can swap the colours of $x$ and $z$ to get a matroid $M'$ such that $E(M')= E(M) \bigtriangleup \{x,y\}$. 
{\color{black} Lemma~\ref{hyperplane_complement} allows us to perform complementation on $M'$ followed by complementation inside the hyperplane $H$.} 
Observe that, after this operation, $y$ becomes a red element and $x$ becomes a green element. Again, using pointed swaps, we swap the colours of $y$ and $z$. Finally, doing switching with respect to $C^*$ gives us a matroid $M''$ such that $E(M'') = E(M)- \{x,y\}$. Since $|E(M'')| < |E(M)|$, $M''$ is obtainable from $P_r$ using the allowed operations and so, $M$ is obtainable as well, a contradiction.
\end{proof}

%Next, we observe that if $P_r$ has rank greater than one, then the operations of complementation inside a hyperplane are enough to give us complementation and all the switchings.

We now show that the operations of on-element and off-element swaps are linked via complementation.

\begin{lemma} 
\label{on_offswap1}
Let $M$ be a restriction of $P_r$ and $u$ be an element of $P_r$. Then 

$$\psi^+_{u}(M) = \omega \psi^-_{u}\omega(M) {\color{black} \text{~~ when $u \in E(M)$,}} $$
and 
$$\psi^-_{u}(M) = \omega \psi^+_{u}\omega(M) {\color{black} \text{~~ when $u \not\in E(M)$.}}$$

\end{lemma}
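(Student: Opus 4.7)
The plan is to recognize that the superscripts $+$ and $-$ in $\psi^+_u$ and $\psi^-_u$ merely record whether $u$ is green or red in the input matroid; the underlying action---examining each line $\{u,f',f''\}$ through $u$ and swapping the colours of $f'$ and $f''$ when they differ---is identical in both cases. Writing $\psi_u$ informally for this common action (well-defined on any 2-colouring of $P_r$), the two identities we must prove both reduce to the single claim $\omega \psi_u \omega = \psi_u$. Indeed, when $u \in E(M)$ the element $u$ is green in $M$ but red in $\omega(M)$, so $\psi^-_u \omega(M) = \psi_u \omega(M)$ and $\psi^+_u(M) = \psi_u(M)$; the case $u \notin E(M)$ is symmetric.

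To establish $\omega \psi_u \omega = \psi_u$, I would first note that complementation $\omega$ preserves the colour-parity of each pair $\{f',f''\}$ lying on a line through $u$: a dichromatic pair remains dichromatic (with each colour flipped), and a monochromatic pair remains monochromatic. Hence $\psi_u$ acts non-trivially on exactly the same collection of lines when applied to $\omega(M)$ as when applied to $M$.

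Next I would check the effect of $\omega \psi_u \omega$ element by element. The point $u$ is flipped twice by $\omega$ and left alone by $\psi_u$, so it retains its original colour. Any other element $f$ lies on a unique line $\{u,f,u+f\}$ through $u$. If $\{f,u+f\}$ is monochromatic in $M$, then $\psi_u$ leaves both points alone and the two applications of $\omega$ cancel on $f$, so $f$ keeps its original colour. If instead $\{f,u+f\}$ is dichromatic in $M$, the first $\omega$ flips each of the two colours, $\psi_u$ then interchanges them on this line, and the second $\omega$ flips each again; a direct check shows that $f$ ends up with the original colour of $u+f$, which is precisely the effect of applying $\psi_u$ directly to $M$. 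In both cases the net action on $f$ coincides with that of $\psi_u(M)$, yielding $\omega\psi_u\omega = \psi_u$.

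The main obstacle is really just the bookkeeping in the dichromatic case---keeping straight the double global flip together with the pairwise swap sandwiched between---but once the argument is organized around the pairs $\{f,u+f\}$, the verification reduces to a short two-case check and the lemma follows.
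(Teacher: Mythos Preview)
Your proof is correct and follows essentially the same element-by-element route as the paper, with the nice unifying observation that both identities amount to $\omega\psi_u\omega = \psi_u$. The paper's verification is a touch slicker: instead of your monochromatic/dichromatic case split, it uses the single equivalence that (for $e \neq u$) the colour of $e$ in $\psi_u(M)$ equals the colour of $e+u$ in $M$, after which commutation with $\omega$ is immediate.
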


\begin{proof}
To show the first part, it suffices to show that $\omega \psi^+_{u}(M) =  \psi^-_{u}\omega(M)$. In considering $\psi^+_{u}(M)$, we note that $u$ must be green. The following statements are equivalent for an element $e$ of $P_r$:

\begin{enumerate}[label=(\roman*)]
    \item $e$ is green in $\omega \psi^+_{u}(M)$;
    
    \item $e$ is red in $\psi^+_{u}(M)$;
    
    \item $e+u$ is red in $M$;
    
    \item $e+u$ is green in $\omega(M)$;
    
    \item $e$ is green in $\psi^-_{u}\omega(M)$.
\end{enumerate}

\noindent Thus the first part holds. 

For the second part, let $u$ be red in $P_r$. Then $u$ is green in $\omega(M)$. Thus, by the first part,
$\psi^+_{u}(\omega(M)) =  \omega \psi^-_{u}\omega(\omega(M))$, so $\psi^+_{u}\omega(M) =  \omega \psi^-_{u}(M)$. Hence $ \omega\psi^+_{u}\omega(M) = \psi^-_{u}(M)$ and the second part holds.

\end{proof}

By combining the last three results we obtain the following.

\begin{theorem}
\label{main2}
Let $P_r$ be a binary projective geometry of rank $r$ greater than one. Then all binary matroids of rank at most $r$ can be obtained from $P_r$ via a sequence of the operations of pointed swaps and  complementation in hyperplanes, where either all such swaps are on-element swaps, or they are all off-element swaps.
\end{theorem}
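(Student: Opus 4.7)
The plan is to bootstrap from Theorem~\ref{main1}, which already produces every binary matroid of rank at most $r$ from $P_r$ via pointed swaps (both types) combined with complementations inside projective hyperplanes. All that remains is to eliminate either all on-element swaps or all off-element swaps from such a sequence, replacing them by swaps of the other type plus additional hyperplane complementations.

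The engine for this replacement is Lemma~\ref{on_offswap1}. If an on-element swap $\psi^+_u$ occurs at some stage, applied to a matroid $M'$ with $u \in E(M')$, then $\psi^+_u(M') = \omega\,\psi^-_u\,\omega(M')$. Crucially, in $\omega(M')$ the element $u$ lies outside the ground set, so the inner operation is genuinely an off-element swap; thus the identity replaces one on-element swap by one off-element swap sandwiched between two global complementations. The symmetric identity handles the opposite direction.

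Next I would invoke Lemma~\ref{hyperplane_complement} (available since $r > 1$) to rewrite each global complementation $\omega$ as a composition of three complementations in projective hyperplanes. Substituting this into the identity from the previous step turns every on-element swap into an off-element swap preceded and followed by three hyperplane complementations each, so the entire sequence given by Theorem~\ref{main1} becomes one in which only off-element swaps and hyperplane complementations appear. Applying the analogous substitution in the reverse direction yields a sequence using only on-element swaps and hyperplane complementations.

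I do not anticipate any serious obstacle. The only subtlety is verifying that the hypothesis on the color of $u$ in Lemma~\ref{on_offswap1} is met at each replacement, but this is automatic: to invoke the identity at the point where $\psi^+_u$ appears one needs $u \in E(M')$, which is the very definition of an on-element swap in the original sequence, and then $u \notin E(\omega(M'))$ makes the $\psi^-_u$ on the right-hand side legitimately off-element. Chaining the replacements throughout the sequence from Theorem~\ref{main1} therefore produces the desired conclusion in either of the two stated forms.
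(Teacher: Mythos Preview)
Your proposal is correct and follows essentially the same approach as the paper: invoke Theorem~\ref{main1}, use Lemma~\ref{on_offswap1} to trade one type of pointed swap for the other at the cost of two global complementations, and then absorb those complementations into hyperplane complementations via Lemma~\ref{hyperplane_complement}. Your write-up is more explicit about checking the colour hypothesis at each replacement, but the argument is the same.
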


\begin{proof}
By the last lemma, when one has the operation of complementation, one needs only on-element or off-element swaps but not both. By Lemma \ref{hyperplane_complement}, complementation can be achieved via a sequence of complementations inside of projective hyperplanes. The theorem follows from Theorem \ref{main1}.
\end{proof}

\begin{theorem}
\label{main3}
Let $P_r$ be a binary projective geometry of rank $r$ and $M$ be a binary matroid of rank at most $r$ such that $M$ is not isomorphic to $U_{0,0}$ or $U_{1,1}$. Then $M$ can be obtained from $P_r$ via a sequence of the operations of local complementation and pointed swaps.

\end{theorem}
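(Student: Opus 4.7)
The plan is to build upon Theorem~\ref{main2}, which provides a sequence of pointed swaps and hyperplane complementations taking $P_r$ to $M$, and to replace each hyperplane complementation with a short sequence of local complementations and pointed swaps. The key tool is Lemma~\ref{lc1}: when $E(N) \cap C^*$ has rank $r$, $\lambda_{C^*}(N)$ equals complementation in the hyperplane $H = E(P_r) - C^*$. By Lemma~\ref{on-off1}, it also suffices to reach at least one $t$-element restriction of $P_r$ for each $t \in \{2, \ldots, 2^r - 1\}$, since pointed swaps then yield all $t$-element restrictions of that size.

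For each hyperplane-complementation step in the sequence from Theorem~\ref{main2}, say $N \to N \triangle H$ with $C^* = E(P_r) - H$: if $|E(N) \cap C^*| \geq r$, I use pointed swaps to move $N$ to some $N'$ of the same cardinality, with the same $H$-intersection size, and with $E(N') \cap C^*$ spanning $P_r$. Then $\lambda_{C^*}(N') = N' \triangle H$ has the same cardinality as $N \triangle H$, so further pointed swaps (via Lemma~\ref{on-off1}) finish the job. This replaces one hyperplane complementation by at most two rounds of pointed swaps flanking one local complementation.

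The main obstacle is the case $|E(N) \cap C^*| < r$, where no pointed-swap rearrangement can achieve the rank-$r$ condition in $C^*$ while preserving the partition sizes between $C^*$ and $H$. To handle this, I plan to pick an auxiliary hyperplane $H^{**}$ such that $E(N) \cap (E(P_r) - H^{**})$ has rank $r$ (available whenever $r(N) = r$, by choosing $H^{**}$ disjoint from a basis of $N$; the subcase $r(N) < r$ falls under an inductive sub-argument since then $N$ sits inside a copy of $P_{r-1}$ on which our operations restrict correctly) and then to realize $N \triangle H$ as the composition $N \to N \triangle H^{**} \to (N \triangle H^{**}) \triangle H \to N \triangle H$, simulating each symmetric-difference step by the pointed-swap-then-$\lambda$ recipe above. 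The hardest part will be verifying that the spanning conditions at the two later steps can be simultaneously arranged, which requires careful matroid-cardinality tracking and possibly further pointed-swap adjustments between $\lambda$-applications.

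To complete the proof I will check that the intermediate matroids in the modified sequence avoid $U_{0,0}$ and $U_{1,1}$. The reduction in the proof of Theorem~\ref{main1} changes $|E(M)|$ by $2$ per iteration, so intermediate sizes remain at least $2$ when $|E(M)| \geq 4$. For the small cases $|E(M)| \in \{2, 3\}$ I will give an explicit construction: for $r = 2$, $\lambda_{C^*}(P_2) = U_{2,2}$; for $r \geq 3$, a short chain starting $P_r \to A_r$ and using pointed swaps together with a single $\lambda$ of size-change $\pm 1$ (set up via a line $\{a,b,c\}$ with $a,b \in C^*$ and $c \in H$, arranging $E(N)$ to equal $\{a,b\}$ together with a subset of $H$ containing $c$) reaches matroids of sizes $2$ and $3$.
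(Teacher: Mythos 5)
Your central device---use Lemma~\ref{on-off1} to rearrange the current matroid $N$ into an $N'$ of the same cardinality with $E(N')\cap C^*$ containing a basis, so that by Lemma~\ref{lc1} the local complementation $\lambda_{C^*}(N')$ realizes complementation in the hyperplane $H=E(P_r)-C^*$---is sound, and it is essentially the same device the paper uses in its sublemma~\ref{newsublemma1}. The genuine gap is exactly the case you defer to ``the hardest part.'' The simulation of a hyperplane complementation requires some projective cocircuit $C'$ with $|E(N)\cap C'|\ge r$, and this can be unachievable no matter how you rearrange: it fails whenever $|E(N)|<r$, and more generally whenever $r(N)=s$ with $2^{s-1}<r$, since a cocircuit of $P_r$ meets a rank-$s$ flat in at most $2^{s-1}$ points. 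Your auxiliary-hyperplane detour does not escape this, because its \emph{first} step $N\to N\bigtriangleup H^{**}$ is itself a hyperplane complementation subject to the same requirement; and the ``inductive sub-argument'' for $r(N)<r$ is unjustified, since pointed swaps and local complementations of $P_r$ do not restrict to operations on a $P_{r-1}$-flat (a pointed swap with respect to an element off the flat moves elements out of it). Moreover, such small intermediates genuinely occur: the reduction in Theorem~\ref{main1} builds $M$ from a matroid $M''$ with $|E(M)|-2$ elements by first applying $\sigma_{C^*}$ (itself a product of hyperplane complementations), so every target with $|E(M)|\le r+1$ forces the simulation through a matroid with fewer than $r$ elements. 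Your separate treatment of $|E(M)|\in\{2,3\}$ therefore does not cover the problematic range, and that treatment is itself arithmetically wrong: $A_r$ has $2^{r-1}$ elements, so a single local complementation of size-change $\pm 1$ cannot reach sizes $2$ or $3$.

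For contrast, the paper sidesteps all of this by splitting into two regimes. It handles small matroids by building \emph{up} from $U_{2,2}$ using coloop-based local complementations (Lemma~\ref{lc11}: two coloops $e,f$ on a projective line $\{e,f,g\}$ let $\lambda_{C^*}$ adjoin $g$, with Lemma~\ref{lc22} guaranteeing suitable hosts up to $2^{r-2}+2$ elements), handles large matroids by the basis-in-a-cocircuit trick you propose, and then links the two equivalence classes through a single matroid of size $2^{r-2}+2^{r-1}+1$ reachable from both sides. Some mechanism of this kind---growing small matroids by local complementations whose closures are small flats rather than all of $P_r$---is what your proposal is missing; without it the argument does not go through.
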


Our proof of this theorem will use the following two lemmas.

\begin{lemma}
\label{lc11}
Let $M$ be a $k$-element restriction of $P_r$ such that $e$ and $f$ are coloops of $M$. Then there is a $(k+1)$-element matroid $M'$  that can be obtained from $M$ via local complementation.
\end{lemma}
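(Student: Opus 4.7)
The plan is to apply Lemma~\ref{localcomp1} with the set $D = \{e,f\}$, since both $e$ and $f$ are coloops of $M$ and hence $\{e\}$ and $\{f\}$ are (disjoint) $1$-element cocircuits of $M$. That lemma produces a projective cocircuit $C^*$ of $P_r$ with $C^*\cap E(M) = \{e,f\}$. I then intend to compute $\mathrm{cl}_{P_r}(\{e,f\})-C^*$ explicitly and show that complementing inside this set adjoins precisely one new element to $E(M)$.

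First, the projective closure $\mathrm{cl}_{P_r}(\{e,f\})$ is the three-point line $\{e,f,e+f\}$. The complement $E(P_r)-C^*$ is a projective hyperplane, and any line meets a hyperplane in either the whole line or exactly one point. Since $e,f \in C^*$, the line is not contained in the hyperplane, so its intersection with the hyperplane is a single point, which must be $e+f$. Hence $\mathrm{cl}_{P_r}(\{e,f\}) - C^* = \{e+f\}$, and so $\lambda_{C^*}(M) = M\triangle\{e+f\}$.

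The one remaining point is that $\lambda_{C^*}(M)$ actually gains an element, rather than losing one; that is, $e+f \notin E(M)$. This is the only substantive step: if $e+f$ did belong to $E(M)$, then $\{e,f,e+f\}$ would be a $3$-circuit of $P_r$ contained in $E(M)$, and therefore a circuit of $M$. But this contradicts $e$ being a coloop of $M$, since coloops lie in no circuit. Consequently $M' := \lambda_{C^*}(M) = M\triangle\{e+f\}$ is obtained from $M$ by adjoining $e+f$, yielding a $(k+1)$-element restriction of $P_r$, as required.

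I do not expect any real obstacle here; the only delicate point is to verify that the chosen local complementation adds rather than removes an element, and the coloop hypothesis on $e$ (or symmetrically on $f$) forces this via the absence of the triangle $\{e,f,e+f\}$ in $M$.
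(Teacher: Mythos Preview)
Your proof is correct and follows essentially the same route as the paper: both invoke Lemma~\ref{localcomp1} with $D=\{e,f\}$ to obtain $C^*$, identify the third point $g=e+f$ of the line as the unique point of $\mathrm{cl}_{P_r}(\{e,f\})-C^*$, and observe that $g\notin E(M)$ (the paper simply asserts this, while you spell out the coloop/triangle argument). The only cosmetic difference is that the paper argues $g\notin C^*$ via the parity of $|C^*\cap L|$, whereas you phrase it in terms of line--hyperplane intersection; these are equivalent.
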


\begin{proof}
Let $L=\{e,f,g\}$ be the line containing $e$ and $f$ in $P_r$. By Lemma ~\ref{localcomp1}, there is a projective cocircuit $C^*$ such that $C^* \cap E(M) = \{e,f\}$. Since $|C^* \cap L|$ cannot be odd, $g \notin C^*$. Note that $g \notin E(M)$ and $E(\lambda_{C^*}(M)) = E(M) \cup g$. Therefore, the result holds.
\end{proof}

\begin{lemma}
\label{lc22}
Let $r \geq 3$. For an integer $k$ in  $[2, 2^{r-2}+1]$, there is a restriction $M$ of $P_r$ such that $|E(M)|=k$ and $M$ has two coloops. 
\end{lemma}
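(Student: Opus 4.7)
The plan is to construct $M$ directly as a restriction $P_r|T$ where $T = \{e,f\} \cup S$, with $e, f$ the two prescribed coloops and $S \subseteq E(P_r) \setminus \{e,f\}$ chosen to force this. I will fix any two distinct elements $e, f$ of $P_r$ and let $g$ be the third point of the line of $P_r$ through $e$ and $f$, so $e + f = g$ in $GF(2)^r$. The basic observation I will rely on is that, because $e = f + g$ and $g \neq f$, we have $e \in \mathrm{cl}_{P_r}(\{f\} \cup S)$ if and only if $g \in \mathrm{cl}_{P_r}(S)$, and symmetrically for $f$. Since $\mathrm{cl}_M$ agrees with $\mathrm{cl}_{P_r}$ on $T$, the elements $e$ and $f$ will both be coloops of $M$ exactly when $g \notin \mathrm{cl}_{P_r}(S)$.

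To produce such an $S$ of size $k-2$, I will place it inside a rank-$(r-2)$ projective flat $F$ of $P_r$ that avoids $g$. I will obtain $F$ by extending $\{e, f\}$ to a basis $\{e, f, b_3, \ldots, b_r\}$ of $P_r$ and setting $F = \mathrm{cl}_{P_r}(\{b_3, \ldots, b_r\})$; then $|F| = 2^{r-2} - 1$, and $g = e+f$ does not lie in $F$ since $F$ is spanned by the remaining basis vectors. Because $k - 2 \in [0, 2^{r-2} - 1]$, some $(k-2)$-subset $S$ of $F$ exists, and for such a choice $\mathrm{cl}_{P_r}(S) \subseteq F$ guarantees $g \notin \mathrm{cl}_{P_r}(S)$. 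The resulting restriction $M = P_r | (\{e,f\} \cup S)$ then has $|E(M)| = k$ and both $e$ and $f$ as coloops, as required.

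The main (fairly mild) obstacle is the algebraic equivalence $e \in \mathrm{cl}_{P_r}(\{f\} \cup S) \Leftrightarrow g \in \mathrm{cl}_{P_r}(S)$; once it is in hand, the construction of $F$ from an extended basis and the bound $k \leq 2^{r-2} + 1$ being exactly the size of $F$ plus $2$ are entirely mechanical. The hypothesis $r \geq 3$ is precisely what is needed to ensure that $\{b_3, \ldots, b_r\}$ is non-empty so that $F$ is a well-defined projective flat with $|F| \geq 1$.
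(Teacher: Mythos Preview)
Your construction is correct and is essentially the paper's proof: take a rank-$(r-2)$ flat $F$ of $P_r$, place $k-2$ elements inside it, and adjoin two elements outside $F$ that raise the rank by two, so that those two elements are coloops. One caveat: the ``basic observation'' $e \in \mathrm{cl}_{P_r}(\{f\}\cup S) \Leftrightarrow g \in \mathrm{cl}_{P_r}(S)$ is not valid for arbitrary $S\subseteq E(P_r)\setminus\{e,f\}$ (it fails whenever $e\in\mathrm{cl}_{P_r}(S)$ but $g\notin\mathrm{cl}_{P_r}(S)$); the correct general equivalence has the extra disjunct $e\in\mathrm{cl}_{P_r}(S)$ on the right. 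In your actual construction this is harmless because $S\subseteq F$ forces $e,f,g\notin\mathrm{cl}_{P_r}(S)$, and indeed you can bypass the observation entirely: since $\{e,f,b_3,\dots,b_r\}$ is a basis and $S\subseteq\mathrm{cl}_{P_r}(\{b_3,\dots,b_r\})$, the elements $e$ and $f$ are automatically coloops of $P_r|(\{e,f\}\cup S)$.
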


\begin{proof}
Let $F$ be a flat of $P_r$ of rank $r-2$ and $M$ be a restriction of $P_r$ such that $|E(M)| = |E(M|F)| + 2$ and $r(M) = r(M|F) + 2$. Note that $M$ has two coloops. The maximum number of elements $M$ can have is $|F| + 2$, that is, $2^{r-2}+1$ . Therefore, our result follows.
\end{proof}

\begin{proof}[Proof of Theorem \ref{main3}]
We can check that the result is true for $r=1,2$, and $3$. Therefore, we may assume that $r \geq 4$. We say $M_1 \sim M_2$ if $M_1$ is obtainable from $M_2$ via the given operations. Since both the operations have order two, the relation $\sim$ is symmetric and so is an equivalence relation.

First, we show that all binary matroids with at least two and at most $2^{r-2} + 2$ elements can be obtained from $U_{2,2}$ via the operations above. Given a matroid $M$ with at least two coloops and at most $2^{r-2}+1$ elements altogether, Lemma {\color{black}\ref{lc11}} implies that we can obtain a matroid with $|E(M)|+1$ elements. Thus, for all integers $k$ in $[2, 2^{r-2}+2]$, we can construct a $k$-element matroid starting with $U_{2,2}$. Hence, by {\color{black} Lemma~\ref{on-off1},} using pointed swaps, we can construct every $k$-element matroid from $U_{2,2}$.

Let $M$ be a matroid with $2^{r-2}+2$ elements. Via pointed swaps, we can obtain a rank-$r$ matroid $M'$ from $M$ such that $E(M') \subseteq C^*$ for a projective cocircuit $C^*$. Note that $ \lambda_{C^*}(M')$ has size $2^{r-2}+2^{r-1}+1$ and is obtainable from $U_{2,2}$.

\setcounter{theorem}{6}

We now show the following.

\begin{sublemma}
\label{newsublemma1}
For $0 \leq k \leq 2^{r-1}-r$, every binary matroid with $(2^r-1)-2k$ elements can be obtained from $P_r$ using the given operations.
\end{sublemma}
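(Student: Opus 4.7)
The plan is to proceed by induction on $k$. The base case $k=0$ is immediate since $P_r$ is the only restriction of $P_r$ of size $2^r-1$. For the inductive step I assume every matroid of size $2^r+1-2k$ is obtainable and aim to show the same for size $2^r-1-2k$. By Lemma~\ref{on-off1}, it suffices to obtain a single matroid of the target size, since pointed swaps then produce all others.

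The central arithmetic constraint is that pointed swaps preserve $|E(M)|$, while a single local complementation $\lambda_{C^*}$ changes $|E(M)|$ by $|F|-2|E(M)\cap F|$, where $F = \mathrm{cl}_{P_r}(E(M)\cap C^*)\setminus C^*$. A direct calculation gives $|F| = 2^{j-1}-1$ for $j = r(E(M)\cap C^*)$, since $\mathrm{cl}_{P_r}(E(M)\cap C^*)$ is a rank-$j$ projective subgeometry whose intersection with $C^*$ is a hyperplane thereof. Hence the size change is always either $0$ or odd, so no single local complementation achieves a net change of $-2$; at least two will be required, with pointed swaps interposed.

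My strategy is to compose two local complementations whose size changes sum to $-2$, taking changes $-3$ and $+1$. Start with any $N$ of size $2^r+1-2k$ (obtainable by the inductive hypothesis). Using pointed swaps, arrange $|E(N)\cap H_1| = 2^{r-2}+1$ for some projective hyperplane $H_1$ such that $E(N)\setminus H_1$ spans $P_r$; then with $C_1^* = E(P_r)\setminus H_1$ we have $\lambda_{C_1^*}(N) = N \triangle H_1$, of size $2^r-2-2k$. After further pointed swaps to arrange $|E(N')\cap H_2| = 2^{r-2}-1$ with $E(N')\setminus H_2$ spanning $P_r$, a second local complementation with $C_2^* = E(P_r)\setminus H_2$ gives size change $+1$, landing at $2^r-1-2k$.

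The main obstacle will be verifying that these intermediate configurations exist for every $k$ in $[1,2^{r-1}-r]$. The prescribed value of $|E(M)\cap H|$ must lie in the admissible range $[\max(0,|E(M)|-2^{r-1}),\min(|E(M)|,2^{r-1}-1)]$, and the rank condition on $E(M)\setminus H$ requires $|E(M)\setminus H|\ge r$. These conditions may fail at the extremes of $k$: when $k$ is near $1$ and $|E(N)|$ is close to $2^r-1$, too many elements are forced into $E(N)\setminus H_1$; and when $k$ approaches $2^{r-1}-r$ and $|E(N)|$ drops toward $2r+1$, there may not be enough elements to place $2^{r-2}+1$ of them in $H_1$. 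In such cases I will substitute alternative two-local-complementation decompositions of $-2$ — two changes of $-1$ obtained via $|F|=1$ (appropriate when $|E(M)|$ is small enough to confine $E(M)\cap C^*$ to a rank-$2$ flat), or $-d$ followed by $+(d-2)$ for larger odd $d$ corresponding to other values of $j$. The range bound $k\le 2^{r-1}-r$ is precisely calibrated so that at least one such decomposition is admissible for every valid $k$, and verifying this should reduce to a routine range check across a few regimes of $|E(N)|$.
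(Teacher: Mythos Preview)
Your inductive approach is plausible in outline but is not a complete proof, and it is considerably more complicated than the paper's argument. The paper does \emph{not} induct on $k$. Instead it gives a direct, uniform construction that uses exactly two local complementations with respect to the \emph{same} projective cocircuit $C^*$, with pointed swaps in between. Concretely: apply $\lambda_{C^*}$ to $P_r$ to obtain the matroid $N$ with $E(N)=C^*$; fix a basis $B\subseteq C^*$ of $P_r$; by Lemma~\ref{on-off1}, use pointed swaps to move any $k$ elements of $C^*-B$ into $E(P_r)-C^*$, producing $N'$ with $|E(N')|=2^{r-1}$, $|E(N')\cap C^*|=2^{r-1}-k$, and $B\subseteq E(N')\cap C^*$; then $\lambda_{C^*}(N')=N'\triangle(E(P_r)-C^*)$ has size $2^{r-1}+(2^{r-1}-1)-2k=(2^r-1)-2k$. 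The hypothesis $k\le 2^{r-1}-r$ enters exactly once, to guarantee that $C^*-B$ has at least $k$ elements. No case analysis on $k$ is needed.

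By contrast, your $(-3,+1)$ decomposition genuinely fails at the small-$k$ end (for $k=1$ and $r\ge 4$ one would need $|E(N)\cap C_1^*|=3\cdot 2^{r-2}-2>|C_1^*|$), and you acknowledge it fails at the large-$k$ end as well. You then defer the repair to ``a routine range check across a few regimes'' with unspecified alternative decompositions. That is the gap: the statement is not proved until those regimes are written down and each alternative is verified to be admissible. It may well be salvageable, but it is substantially messier than the paper's two-step construction, which sidesteps all of these feasibility constraints by always passing through the fixed intermediate size $2^{r-1}$.
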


 Recall that an element $e$ of $P_r$ is coloured green if $e$ is in the ground set of the present matroid and red otherwise. We start with $P_r$ and do a local complementation with respect to a projective cocircuit $C^*$ to obtain a matroid $N$ whose ground set is $C^*$. Let $B$ be a basis of $P_r$ contained in $C^*$. Pick $k$-element subsets of each of $C^*-B$ and $E(P_r)-C^*$ and, {\color{black} by Lemma~\ref{on-off1},  pointed swaps enable us to} interchange the colours on these $2k$ elements to get a matroid $N'$. Now, we do a local complementation in $N'$ with respect to $C^*$ to obtain a matroid $N''$ with $ (2^r - 1) - 2k$ elements. Thus \ref{newsublemma1} holds.

 The above implies that all matroids of odd size between $2^{r}-1$ and $2r-1$ can be obtained from $P_r$ using the allowed operations. Therefore, the $(2^{r-2}+2^{r-1}+1)$-element matroid $\lambda_{C^*}(M')$, which was constructed above from $U_{2,2}$, can also be constructed from $P_r$. Since $\sim$ is an equivalence relation, from $P_r$, we can obtain all matroids with $t$ elements for all $t$ in $[2,2^{r-2}+2]$. This implies that all matroids other than $U_{1,1}$ that have an odd number of elements are obtainable from $P_r$.

Finally, we show that all matroids with a non-zero even number of elements are obtainable from $P_r$. Since $|C^*| - 1$ is odd for a projective cocircuit $C^*$, we can obtain a matroid $N$ from $P_r$ such that $E(N) = C^* - \{e\}$. Let $k$ be an integer in $[0, 2^{r-1}-r-1]$. Let $B$ be a basis of $P_r$ contained in $C^* - \{e\}$. Pick $k$-element subsets of each of $C^*-(B \cup e)$ and $E(P_r)-C^*$ and, using pointed swaps, interchange the colours on these $2k$ elements to get a matroid $N'$. Now, we do a local complementation in $N'$ with respect to $C^*$ to obtain a matroid $N''$ of size $ (2^r - 2) - 2k$. Observe that, when $k=2^{r-1}-r-1$, the matroid $N''$ has $2r$ elements. Since $r \geq 4$ and all matroids having at most $2^{r-2} + 2$ elements are obtainable from $P_r$, the matroids having even size less than $2r$ are also obtainable from $P_r$. This completes the proof. \end{proof}

{\color{black}\section*{Acknowledgement} 
The authors thank the referee for carefully reading the paper and, in particular, for detecting an error in the proof of Theorem~\ref{main}.}


\begin{thebibliography}{99}


\bibitem{harju1} Ehrenfeucht, A., Harju, T., and Rozenberg, G., {\it Theory of $2$-Structures},  World Scientific, River Edge, NJ, 1999.

\bibitem{aer04} Ehrenfeucht, A., Harju, T., and, Rozenberg, G., Transitivity of local complementation and switching on graphs, {\em Discrete Math.} \textbf{278} (2004), 45--60.

\bibitem{hage1} Hage, J., Structural aspects of switching classes, Ph.D. Thesis, Leiden University, Leiden, 2001.


\bibitem{ox1} Oxley, J., {\em Matroid Theory}, Second Edition, Oxford University  
Press,
New York, 1992.

\bibitem{seid} Seidel, J.J., A survey of two-graphs, {\it Colloquio Internationalesulle Teorie Combinatorie (Rome 1973), Tomo I}, pp. 481--511, Atti dei Converni Lincei, No. 17, Accad. Naz. Lincei, Rome, 1976.



\end{thebibliography}
\end{document}